\newcounter{ipotesi}
 \makeatletter \@addtoreset{equation}{section}
\newtheorem{thm}{Theorem}[section]
\newtheorem{hyp}[thm]{Hypotheses}{\rm}
\newtheorem{hyp0}[thm]{Hypothesis}{\rm}
\newtheorem{lemm}[thm]{Lemma}
\newtheorem{cor}[thm]{Corollary}
\newtheorem{prop}[thm]{Proposition}
\newtheorem{rmk}[thm]{Remark}{\rm}
\newcounter{parentenv}
\newcommand{\R}{{\mathbb R}}
\newcommand{\N}{{\mathbb N}}
\newcommand{\Rd}{\mathbb R^d}
\newcommand{\eps}{\varepsilon}
\newcommand{\ra}{\rightarrow}
\newcommand{\lip}{\textrm{Lip}}
\renewcommand{\hat}[1]{\widehat{#1}}
\renewcommand{\tilde}[1]{\widetilde{#1}}
\newcommand{\set}[1]{{\left\{#1\right\}}}
\newcommand{\pa}[1]{{\left(#1\right)}}
\newcommand{\sq}[1]{{\left[#1\right]}}
\newcommand{\gen}[1]{{\left\langle #1\right\rangle}}
\newcommand{\abs}[1]{{\left|#1\right|}}
\newcommand{\norm}[1]{{\left\|#1\right\|}}
\newcommand{\eqsys}[1]{{\left\{\begin{array}{ll}#1\end{array}\right.}}
\newcommand{\tc}{\, \middle |\,}
\begin{document}

\frenchspacing

\title[Functional inequalities for generalised Mehler semigroups]{Functional inequalities \\
for some generalised Mehler semigroups}
\author[L. Angiuli, S. Ferrari and D. Pallara]{Luciana Angiuli, Simone Ferrari, Diego Pallara}
\address{L.A., S.F.:  Dipartimento di Matematica e Fisica ``Ennio De Giorgi'', Universit\`a del Salento, Via per Arnesano, I-73100 LECCE, Italy}
\address{D.P.:  Dipartimento di Matematica e Fisica ``Ennio De Giorgi'', Universit\`a del Salento,
and I.N.F.N., Sezione di Lecce, Via per Arnesano, I-73100 LECCE, Italy}
\email{luciana.angiuli@unisalento.it}
\email{simone.ferrari1@unipr.it}
\email{diego.pallara@unisalento.it}
\keywords{Generalised Mehler semigroups, Functional inequalities, L\'{e}vy processes}
\subjclass[2020]{35R15,47D07,60J60}

\date{\today}

\begin{abstract}
We consider generalised Mehler semigroups and, assuming the existence of an associated invariant measure 
$\sigma$, we prove functional integral inequalities with respect to $\sigma$, such as logarithmic
Sobolev and Poincar\'{e} type. Consequently, some integrability properties of exponential functions with 
respect to $\sigma$ are deduced. 
\end{abstract}

\date{\today}
\maketitle

\section{Introduction}

Generalised Mehler semigroups are defined for real-valued, bounded and Borel measurable functions $f:X\ra\R$, i.e. $f\in B_b(X)$, by the formula
\begin{equation}\label{GMsgrp}
(P_tf)(x)=\int_X f(T_t x+y) \mu_t(dy),
\end{equation}
where $X$ is a (finite or infinite dimensional) Banach space, $(T_t)_{t\geq 0}$ is a strongly
continuous semigroup of bounded operators on $X$ and $(\mu_t)_{t\geq 0}$
is a family of Borel probability measures on $X$ verifying $\mu_0=\delta_0$ and
$\mu_{t+s}=(\mu_t \circ T^{-1}_s) * \mu_s$ for any $s,t \geq 0$.
The semigroup \eqref{GMsgrp} is related to the stochastic differential equation
\begin{equation}\label{Mehler_SDE}
\left\{\begin{array}{ll}
dZ(t) = AZ(t)dt + dY(t), &t>0;
\\
Z(0)=x\in X;
\end{array}\right.
\end{equation}
where $A:D(A)\subseteq X\to X$ is the infinitesimal generator of $T_t$ and $Y(t)$ is a L\'{e}vy process
in $X$, i.e., a stochastic process with c\`{a}dl\`{a}g trajectories starting at $0$ and having
stationary and independent increments. For $\xi\in X^*$, $t>0$ we have
$\mathbb{E}[i\xi Y(t)]=\exp(-t\lambda(\xi))$ and $\mu_t$ is defined through its characteristic function
$\hat{\mu}_t(\xi)=\exp(-\int_0^t\lambda(T_s^*\xi)ds)$, see \cite{Bog3}.
By the L\'{e}vy-Khinchine theorem, the function $\lambda$ is determined by its
{\em characteristics} $[b,Q,M]$ with $b\in X$, $Q$ is a nonnegative definite symmetric
trace-class operator on $X$
and $M$ is a L\'{e}vy measure, see \eqref{lambda} below. The semigroup $P_t$ is related to
\eqref{Mehler_SDE} by
\[
P_tf(x)=\mathbb{E}[f(Z(t,x))],\qquad t\geq 0,\ x\in X, \, f \in B_b(X);
\]
where $Z(t,x)$ is the (mild or weak) solution of \eqref{Mehler_SDE}.
If $Y(t)=\sqrt{Q}W(t)$, where $W(t)$ is a Brownian motion (i.e. $M\equiv 0$), then, setting
$Q_tx=\int_0^tT_sQT_s^*xds$, $\mu_t=\mathcal{N}(0,Q_t)$ and $P_t$ is the
Ornstein--Uhlenbeck semigroup given by the classical Mehler formula. In this case the trajectories are
continuous, whereas in the general case $Y(t)$ may have jumps giving rise to nonlocal effects. Indeed,
the (weak) generator of the semigroup $P_t$ is in general a nonlocal, or
pseudodifferential operator (see \cite{LR02} and the example in Subsection \ref{OUexample}) and is given by
\begin{equation}\label{generator}
\mathcal{L}f(x)=\frac{1}{2} {\rm Tr}[QD^2f](x) + \langle x, A^*D f(x)\rangle
+\int_X[f(x+y)-f(x)-\gen{Df(x),y}\chi_{B_1}(y)]M(dy),
\end{equation}
on regular functions.
We refer to Section \ref{SecPrelim} for a more detailed explanation, to \cite{PZ07} for a general
introduction to these topics, to 
\cite{App07,App15,Bog3,Chojnowska,FR,LR02,LR04,PriTra16,PriZab06,RW03,WangBook} 
as more specific basic references to generalised Mehler semigroups and to the very recent \cite{LR} and the 
reference therein for an updated account on the regularity theory, which we do not discuss here.

In this paper we always assume that $X$ is a separable Hilbert space and that there exists a unique 
invariant measure $\sigma$ associated to $P_t$, keeping the conditions given in \cite{FR}, and 
look for functional inequalities with respect to $\sigma$. The most classical ones are the {\em logarithmic
Sobolev inequalities} coming back to \cite{Gro75,Gro93} and \cite{Federbush}, a theory widely
developed in the Wiener case $M\equiv 0$. We refer to \cite{Ane,LedAMS,Royer,WangBook} and the 
reference therein, as well as to \cite{AFP19,BF20,Cap19,DPDG02,Kaw06} for more recent results. 
For the general case little is known and we refer to \cite{Chafai} for a general discussion of 
functional inequalities related to entropy.
In the general case of processes with jumps such estimates are not available, as pointed out e.g.
in \cite{BL98,RW03,WangBook,Wu00}. Therefore, as done in the quoted papers, we study modifications of such estimates.
In particular, we estimate the {\em entropy} of positive measurable functions $f$ by the integral of some relative increments of $f$ with respect
to the L\'{e}vy measure $M$, which is charged to take into account the nonlocal effects. Accordingly,
our estimates hold true for positive functions whose infimum is far from $0$, see Theorem \ref{thm_neve}.
From these modified logarithmic Sobolev type inequalities we derive Poincar\'e inequalities on a
suitable class of functions and we study the exponential integrability of Lipschitz continuous functions, which in our
framework appears to be the natural counterpart of the classical Fernique theorem in a Gaussian
context. As further consequences of the basic estimates, comparisons of moments of the
measures $M$ and $\sigma$ are provided.

In order to simplify the presentation, we have performed all the computations assuming that
$Q=0$ in the above recalled representation of the function $\lambda$, which amounts to
saying that there is no diffusion term in the generator, see \eqref{generator} and \eqref{formula_L}.
We stress that this is not restrictive, because the general case can be recovered by standard arguments.
Indeed, at the end of Sections \ref{sec_log}, \ref{sec_exp}, \ref{SecEx} we discuss the
adaptation of the proofs and the results presented in each section needed to extend them
to the general case. In particular, in Remark \ref{gaussiana} we describe the new invariant measure
and how the entropy estimates must be modified, in Remark \ref{gauss_exp} we sketch how
the statement and the proof of Theorem \ref{promosso} must be modified to get exponential
integrability with respect to the new invariant measure and in Remark \ref{gauss_ex}
we point out that also the examples can easily be generalised to the general case.

The paper is organised as follows. In Section \ref{SecPrelim} we recall the notation
we use and collect the main results on generalised Mehler semigroups concerning the weak generator,
the measures $\mu_t$ and the exponential function $\lambda$ that appears in connection to te L\'{e}vy
process $Y$. In Section \ref{SecInvMeas} we recall a condition ensuring the existence of an invariant
measure $\sigma$ for $P_t$, extend the semigroup to the $L^p(X,\sigma)$ spaces and describe its
asymptotic behaviour. In Section \ref{sec_log} we prove the main logarithmic Sobolev type integral
inequalities, in Section \ref{sec_exp} we study the exponential integrability of Lipschitz continuous
functions. In particular, we deduce from the estimates in Section \ref{sec_log} an estimate on
the size of the tail of the distribution of Lipschitz continuous functions.
Finally, in Section \ref{SecEx} some examples of semigroups to which our results apply are presented.

\paragraph*{\bf Acknowledgements}
S.F. has been partially supported by the OK-INSAID project ARS01-00917.
The authors are members of G.N.A.M.P.A. of the Italian Istituto Nazionale di Alta Matematica (INdAM)
and have been partially supported by the PRIN 2015 MIUR project 2015233N54. \\
The authors are grateful to Alessandra Lunardi and Enrico Priola for many helpful conversations. 

\section{Notation and Preliminaries}\label{SecPrelim}

For any $a,b \in \R$ we set $a\vee b:=\max\{a,b\}$ and $a\wedge b:=\min\{a,b\}$.
Let $X$ be a real separable Hilbert space, that can be either finite or infinie dimensional, with
inner product $\langle\cdot,\cdot\rangle_X$ and associated
norm $\abs{\cdot}_X$, and let $X^*$ be its topological dual. When there is no risk of confusion we drop the
$X$ from the symbols. $\mathcal{B}(X)$ denotes the Borel $\sigma$-algebra of $X$ and $B_b(X)$
the space of real-valued bounded Borel functions on $X$. We denote $B_1$ the open unit ball centred at
the origin in $X$. If $f:X\ra\R$ is a Fr\'{e}chet differentiable function, we denote by $Df$ its
Fr\'{e}chet derivative.

The symbol $\mathcal{L}(X)$ denotes the space of bounded linear operators from $X$ to itself and
$I$ denotes the identity operator. The domain of a linear operator $A$ on $X$ is denoted $D(A)$
and its range ${\rm Ran}(A)$. An operator $T\in\mathcal{L}(X)$ is Hilbert--Schmidt if
\[
\sum^{\infty}_{n=1}\abs{Te_n}^2<\infty,
\]
for some (hence all) orthonormal basis $\{e_n\,|\, n\in\N\}$ of $X$. An operator $T\in \mathcal{L}(X)$ is
trace-class if it is compact and the series $\sum_k|\lambda_k|$ of its eigenvalues $(\lambda_k)_{k\in\N}$,
counted with their multiplicity, is convergent.

The \emph{Sazonov topology on $X$} is the topology generated by the family of seminorms $x \mapsto |Tx|$,
where $T$ ranges over all Hilbert--Schmidt operators on $X$ and it plays an important role in the
definition of L\'{e}vy processes and generalised Mehler semigroups. We refer to
\cite{BF75,Part67Pro} for an in-depth study of all this notions.

If $\mu$ and $\gamma$ are two finite Borel measures on $X$, we denote by $\hat{\mu}$ the characteristic
function of $\mu$ and by $\mu * \gamma$ the convolution measure defined by
\begin{align*}
\hat{\mu}(\xi)&:=\int_Xe^{i\langle x,\xi\rangle}\mu(dx), \qquad\xi\in X^*;
\\
[\mu * \gamma] (E) &:= \int_X  \mu(E-x)\gamma(dx), \qquad E\in \mathcal{B}(X).
\end{align*}
If $T \in \mathcal{L}(X)$ we denote by $\mu\circ T^{-1}$ the image measure defined as
$(\mu \circ T^{-1})(B):=\mu(T^{-1}(B))$ for any $B \in \mathcal{B}(X)$.

A \emph{generalised Mehler semigroup} on $X$ is defined by the formula
\begin{equation}\label{mehler}
(P_tf)(x)=\int_X f(T_t x+y) \mu_t(dy), \qquad f\in B_b(X),
\end{equation}
where $(T_t)_{t\geq 0}$ is a strongly continuous semigroup of linear operators on $X$ and $(\mu_t)_{t\geq 0}$
is a family of Borel probability measures on $X$. The semigroup law for $(P_t)_{t\geq 0}$ is equivalent
to the following property of the family $(\mu_t)_{t\geq 0}$:
\begin{align}\label{semigroup_prop}
\mu_0=\delta_0,\qquad \mu_{t+s}=(\mu_t \circ T^{-1}_s) * \mu_s\qquad  \text{ for all }s,t \geq 0,
\end{align}
see \cite[Proposition 2.2]{Bog3}. We recall, see \cite[Lemma 2.6]{Bog3}, that if for any $\xi\in X^*$
the function $t\mapsto\widehat{\mu_t}(\xi)$ is absolutely continuous on $[0,\infty)$ and differentiable
at $t=0$ then, setting
\[
\lambda(\xi):=-\frac{d}{dt}\widehat{\mu_t}(\xi)_{|_{t=0}},
\]
the function
$t\mapsto \lambda(T_t^*\xi)$ belongs to $L^1_{\rm loc}((0,\infty))$, hence \eqref{semigroup_prop}
is equivalent to
\begin{align}\label{hatmulambda}
\hat{\mu}_t(\xi)= \exp\left(-\int_0^t \lambda (T_s^* \xi)ds\right), \qquad t\ge 0,\ \xi\in X^* .
\end{align}
In this case $\lambda$ is {\em negative definite}, i.e., the matrices whose entries are
$(\lambda(\xi_i-\xi_j))_{i,j=1,\ldots,n}$ are negative definite for every $n\in \N$ and
for every $n$-tuple $(\xi_1,\ldots,\xi_n)\in X^*$. Throughout
the paper we assume that $\lambda$ is also Sazonov continuous on $X^*$. This implies that, for every
$t\geq 0$, the functions $e^{-t\lambda}$ are positive definite (see \cite{BF75}) and Sazonov continuous.
Therefore, by \cite[Theorem VI.1.1]{VTC87}, they are characteristic functions of probability measures
on $X$. This implies that $e^{-t\lambda}$ is the characteristic function of an infinitely divisible
probability measure on $X$. Using the L\'{e}vy--Khinchine theorem, (see \cite[Theorem VI.4.10]{Part67Pro}),
there are $b\in X$, a nonnegative self-adoint trace-class operator $Q\in {\mathcal L}(X)$ and a L\'{e}vy
measure $M$, that is a Borel measure satisfying
\begin{align}\label{LevyM}
M(\{0\})=0, \qquad \int_X (1 \wedge |x|^2)M(dx)<\infty,
\end{align}
such that $\lambda$ can be written in the form
\begin{equation}\label{lambda}
\lambda(\xi)=-i\langle \xi, b\rangle+\frac{1}{2}\langle Q\xi,\xi\rangle
-\int_X \left(e^{i\langle x, \xi\rangle}-1-i\langle x, \xi\rangle\chi_{B_1}(x)\right)M(dx).
\end{equation}
In the sequel we use the symbol $\leftrightarrow$ to associate the triple $[b,Q,M]$ with $\lambda,\mu_t$
and $\hat{\mu}_t$, according to \eqref{lambda}, \eqref{hatmulambda}.
It is immediate to check that $P_t$ maps $C_b(X)$ into itself and
$$
\|P_t f\|_\infty\le \|f\|_\infty, \qquad t>0,\, f \in C_b(X),
$$
but, in general, $P_t$ is not strongly continuous in $C_b(X)$. The continuity of the map
$(t,x)\mapsto P_tf(x)$, $f\in C_b(X)$ allows us to define the \emph{weak generator} $\mathcal{L}$
through its resolvent
$$
[R(\gamma, \mathcal{L})f](x)=\int_0^{\infty} e^{-\gamma t}P_tf(x)dt
$$
for any $\gamma>0$, $f\in C_b(X)$ and $x\in X$. Let $A:D(A)\subseteq X \to X$ be the infinitesimal
generator of the semigroup $(T_t)_{t\geq 0}$. We recall that by \cite[p. 40]{App15} if $Q=0$ we have
\begin{equation}\label{formula_L}
\mathcal{L}f(x)=\langle Ax, D f(x)\rangle+\int_X[f(x+y)-f(x)-\gen{Df(x),y}\chi_{B_1}(y)]M(dy),
\end{equation}
for any $f\in \mathcal{F}C^2_b(X)$. Note that for such functions, the integral in \eqref{formula_L} is
well defined by the Taylor formula.

In the sequel it will be useful to consider a core for the generator of $P_t$ in $C_b$ with respect to
the mixed topology $\tau_m$ on $C_b(X)$, i.e., the finest locally convex topology that agrees on norm
bounded sets with the topology of uniform convergence on compacts (see \cite{GK01} for a more in-depth
discussion about this topology). To do that we state the following hypothesis, see \cite{RW03}.
\begin{hyp0}\label{hyp_App}
There exists an orthonormal basis $\{h_n\,|\, n\in \N\}$ of $X$ consisting of eigenvectors of $A^*$ and
\begin{align}\label{App_cond}
\int_{B_1^c}|x|M(dx)<\infty.
\end{align}
\end{hyp0}
Following \cite{App07} (see also \cite[Remark 5.11]{PriTra16}), we say that $f\in C^2_A(X)$ if $f\in C_b(X)$
belongs to $C^2(X)$, its first and second order derivatives are uniformly bounded and uniformly continuous on
bounded subsets of $X$, ${\rm Ran}(Df)\subseteq D(A^*)$ and
$x\mapsto\langle x,A^*Df(x) \rangle\in C_b(X)$. We say $F\in \mathcal{F}C^2_A(X)$ if there exist
$n\in\N$ and $f\in C^2_b(\R^n)$ such that
\begin{align*}
F(x)=f(\gen{x,h_1},\ldots,\gen{x,h_n}),\qquad x\in X.
\end{align*}
In \cite[Theorem 5.2]{App07}, see also \cite[Remark 5.11]{PriTra16}, it is shown that $\mathcal{F}C^2_A(X)$,
under Hypothesis \ref{hyp_App}, is a core for the generator of $P_t$ in $C_b(X)$ equipped with the mixed topology. Recall that a {\em core} of an operator $A:D(A)\subseteq X\to X$ is a subspace $C\subseteq D(A)$ which is dense in $D(A)$ with respect to the graph norm $\norm{\cdot}_A:=\abs{\cdot}+\abs{A\cdot}$.
In the next section we use this result to prove that $\mathcal{F}C^2_A(X)$ is also a core for
$\mathcal{L}$ in $L^2(X,\sigma)$ as well, when an invariant measure $\sigma$ exists for the semigroup.

\section{Invariant measure}\label{SecInvMeas}

In this section we recall some conditions implying the existence of a unique invariant measure.
A Borel probability measure $\sigma$ on $X$ is an invariant measure for
$(P_t)_{t\geq 0}$ if
\begin{equation}\label{invariance}
\int_X P_t f d\sigma=\int_X fd\sigma,\qquad t\geq 0,\ f \in B_b(X);
\end{equation}
or, equivalently, $\sigma=(\sigma \circ T_t^{-1})*\mu_t$ for any $t>0$, where $\mu_t$ are the
measures in \eqref{mehler}. Throughout this section we consider $\lambda\leftrightarrow [b,0,M]$,
with $\hat{\mu}_t\leftrightarrow [b_t,0,M_t]$ according to \eqref{hatmulambda} and \eqref{lambda}, where
\begin{equation}\label{scrivere meglio}
b_t:=\int_0^t T_s b\, ds+\int_0^t\int_X T_s x\Big(\chi_{B_1}(T_s x)-\chi_{B_1}(x)\Big)M(dx)ds
\end{equation}
and $M_t$ are Borel measures defined setting $M_t(\{0\})=0$ and
\begin{equation}\label{defM_t}
M_t(B):=\int_0^t M(T_s^{-1}(B))ds, \qquad B \in \mathcal{B}(X), 0\notin B.
\end{equation}
Note that $M_t$ are L\'{e}vy measures. Indeed, as $T_t$ is strongly continuous, there exist $K\ge 1$ and
$\omega \in \R$ such that $|T_t x|\le Ke^{\omega t}|x|$ for any $t>0$ and $x \in X$. Hence
\begin{align}\label{est_M}
\int_X (1 \wedge |x|^2)M_t(dx)=&\int_0^t \int_X (1 \wedge |T_sx|^2)M(dx)ds
\notag\\
\le& \frac{K^2}{2\omega}(e^{2\omega t}-1) \int_X (1 \wedge |x|^2)M(dx)<\infty.
\end{align}
Following \cite[Theorem 3.1]{FR} we assume the following hypotheses that guarantee the existence and
the uniqueness of an invariant measure for $(P_t)_{t\geq 0}$.
\begin{hyp}\label{base}
Let $\lambda\leftrightarrow [b,0,M]$, let $(T_t)_{t\geq0}$ be a strongly continuous semigroup and
let $b_t,M_t$ be given in \eqref{scrivere meglio} and \eqref{defM_t}. Assume
\begin{enumerate}[\rm (i)]
\item \label{topo} there exists $b_\infty:=\lim_{t \to \infty}b_t$ in $X$;
\item \label{lino} setting $M_\infty:=\sup_{t>0} M_t$ (i.e., $M_\infty(\{0\})=0$ and
$M_\infty(B)=\int_0^\infty M(T_s^{-1}(B))ds$, $B\in \mathcal{B}(X)$, $0\notin B$), it holds that
\begin{equation*}
\int_0^\infty \int_X (1 \wedge |T_sx|^2)M(dx)ds<\infty;
\end{equation*}
\item $ \lim_{t \to \infty}T_t x=0$ in $X$ for every $x \in X$.
\end{enumerate}
\end{hyp}
The following result can be found in \cite[Section 3]{FR} and it is fundamental in most of the results of
this paper.

\begin{thm}\label{furhman}
Under Hypotheses \ref{hyp_App}, \ref{base} \eqref{topo} and \eqref{lino}, $M_\infty$  is a L\'{e}vy measure and the measure
$\sigma\leftrightarrow [b_\infty,0, M_\infty]$ is invariant for $P_t$. In addition, if Hypothesis
\ref{base}(iii) holds true, then $\sigma$ is unique and
\begin{equation}\label{inv_L}
\int_X \mathcal{L}f d\sigma=0
\end{equation}
for any $f \in \mathcal{F}C^2_A(X)$. Moreover $\mu_t$ converges weakly-star to $\sigma$ as $t \to \infty$.
\end{thm}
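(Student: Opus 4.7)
I would work entirely through characteristic functions, splitting the statement into four steps: (a) $M_\infty$ is a L\'{e}vy measure; (b) $\mu_t$ converges weakly to $\sigma$; (c) $\sigma$ is invariant; (d) uniqueness and the generator identity under Hypothesis \ref{base}(iii). Step (a) is immediate: by Tonelli and the definition \eqref{defM_t},
\[
\int_X (1\wedge|y|^2)\,M_\infty(dy)=\int_0^\infty\int_X (1\wedge|T_sx|^2)\,M(dx)\,ds,
\]
which is finite by Hypothesis \ref{base}(ii); combined with $M_\infty(\{0\})=0$ this shows $M_\infty$ is a L\'{e}vy measure.

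For (b) I would first rewrite \eqref{hatmulambda} via the change of variables $y=T_sx$ (the shift of the cutoff $\chi_{B_1}$ from $x$ to $T_sx$ is precisely what produces the second term of \eqref{scrivere meglio}) in the form
\[
\hat{\mu}_t(\xi)=\exp\!\left(i\langle\xi,b_t\rangle+\int_X\psi(y,\xi)\,M_t(dy)\right),\qquad \psi(y,\xi):=e^{i\langle y,\xi\rangle}-1-i\langle y,\xi\rangle\chi_{B_1}(y).
\]
The drift part tends to $i\langle\xi,b_\infty\rangle$ by Hypothesis \ref{base}(i); since $M_t\uparrow M_\infty$ setwise and $|\psi(y,\xi)|\le C_\xi(1\wedge|y|^2)$ is $M_\infty$-integrable thanks to (a), dominated convergence yields convergence of the jump integral to $\int_X\psi(y,\xi)\,M_\infty(dy)$. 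The pointwise limit of $\hat{\mu}_t$ is therefore the L\'{e}vy--Khinchine expression with characteristics $[b_\infty,0,M_\infty]$, which is the Sazonov-continuous characteristic function of a probability measure $\sigma$ on $X$; the Hilbert-space version of L\'{e}vy's continuity theorem (\cite[Theorem VI.1.1]{VTC87}) then upgrades this pointwise convergence to weak convergence $\mu_t\to\sigma$. I expect this infinite-dimensional upgrade --- which is not automatic as in finite dimension and depends essentially on Sazonov continuity of the limit --- to be the main technical obstacle.

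For (c) I would pass to the weak limit as $t\to\infty$ in the semigroup identity $\mu_{t+s}=(\mu_t\circ T_s^{-1})*\mu_s$: the left-hand side tends to $\sigma$, while weak continuity of pushforward along $T_s$ and of convolution with the fixed probability $\mu_s$ sends the right-hand side to $(\sigma\circ T_s^{-1})*\mu_s$, which is exactly the invariance of $\sigma$ for $P_t$. For (d), if $\nu$ is any other invariant probability measure, then $\hat{\nu}(\xi)=\hat{\nu}(T_t^*\xi)\hat{\mu}_t(\xi)$; since $T_tx\to 0$ for every $x\in X$, Banach--Steinhaus gives $\sup_{t\geq 0}\|T_t\|<\infty$, so $\langle x,T_t^*\xi\rangle=\langle T_tx,\xi\rangle\to 0$ for every $x\in X$ and bounded convergence yields $\hat{\nu}(T_t^*\xi)\to 1$; combined with $\hat{\mu}_t(\xi)\to\hat{\sigma}(\xi)$ from (b), this forces $\hat{\nu}=\hat{\sigma}$ and hence $\nu=\sigma$. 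Finally, for $f\in\mathcal{F}C^2_A(X)$, the fact recalled after Hypothesis \ref{hyp_App} that $\mathcal{F}C^2_A(X)$ is a core for $\mathcal{L}$ in the mixed topology ensures $(P_tf-f)/t\to\mathcal{L}f$ pointwise and boundedly as $t\to 0^+$; applying dominated convergence to the invariance identity $\int_XP_tf\,d\sigma=\int_Xf\,d\sigma$ then delivers $\int_X\mathcal{L}f\,d\sigma=0$.
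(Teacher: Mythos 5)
You should first note that the paper itself does not prove Theorem \ref{furhman}: it is quoted verbatim from \cite[Section 3]{FR}, so there is no in-paper argument to compare against, only the cited source. Your plan follows the same characteristic-function route that \cite{FR} takes, and steps (a), (c) and (d) are correct as written: Tonelli for the Lévy property of $M_\infty$; passage to the weak limit in $\mu_{t+s}=(\mu_t\circ T_s^{-1})*\mu_s$ for invariance; the identity $\hat{\nu}(\xi)=\hat{\nu}(T_t^*\xi)\hat{\mu}_t(\xi)$ together with $T_tx\to 0$ and dominated convergence for uniqueness; and the mixed-topology core property plus dominated convergence for \eqref{inv_L}. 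Your rewriting of $\hat{\mu}_t$ with characteristics $[b_t,0,M_t]$, including the observation that moving the cutoff from $\chi_{B_1}(x)$ to $\chi_{B_1}(T_sx)$ generates the second term of \eqref{scrivere meglio}, is also the right bookkeeping.

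The one point that remains a genuine sketch is exactly the one you flag: upgrading pointwise convergence of $\hat{\mu}_t$ to weak convergence of $\mu_t$ in infinite dimensions. Two remarks here. First, you do not need the continuity theorem to produce $\sigma$ at all: once $M_\infty$ is a Lévy measure, the Lévy--Khinchine theorem on Hilbert spaces (\cite[Theorem VI.4.10]{Part67Pro}) already yields a probability measure with characteristics $[b_\infty,0,M_\infty]$, and invariance and uniqueness can then be run purely at the level of characteristic functions (equality of Sazonov-continuous characteristic functions identifies the measures); the continuity theorem is only needed for the final weak-star convergence claim. Second, for that claim, citing \cite[Theorem VI.1.1]{VTC87} for the limit function alone is not enough: one needs equicontinuity of the family $\{\hat{\mu}_t\}_{t>0}$ at $0$ in the Sazonov topology (equivalently, uniform tightness of $\{\mu_t\}$). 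A standard way to get it, which is essentially what \cite{FR} does, is to note that $\re\lambda\ge 0$ gives
\begin{equation*}
1-|\hat{\mu}_t(\xi)|^2\le 2\int_0^t\re\lambda(T_s^*\xi)\,ds\le 2\int_X\bigl(1-\cos\langle y,\xi\rangle\bigr)M_\infty(dy),
\end{equation*}
a bound independent of $t$ whose right-hand side is Sazonov continuous at $0$ because $M_\infty$ is a Lévy measure. With that supplement your outline becomes a complete proof.
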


Let us show that $\mathcal{F}C^2_A(X)$ is a core for $\mathcal{L}$ in $L^2(X,\sigma)$.

\begin{lemm}
If Hypotheses \ref{hyp_App} and \ref{base} hold true, then $\mathcal{F}C^2_A(X)$ is invariant for $P_t$ and it is a core
for $\mathcal{L}$ in $L^2(X,\sigma)$.
\end{lemm}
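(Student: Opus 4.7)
My plan is to establish three intermediate facts and then invoke a standard core criterion. The criterion I have in mind (see e.g.\ Engel--Nagel, Proposition II.1.7) states that a dense, $P_t$-invariant subspace of the domain of the generator of a $C_0$-semigroup is automatically a core. So it suffices to check invariance of $\mathcal{F}C^2_A(X)$ under $P_t$, density of $\mathcal{F}C^2_A(X)$ in $L^2(X,\sigma)$, and the inclusion $\mathcal{F}C^2_A(X)\subseteq D(\mathcal{L}_2)$, where $\mathcal{L}_2$ denotes the generator of the $L^2(X,\sigma)$-realisation of $P_t$.

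For invariance, write $F(x)=f(\langle x,h_1\rangle,\ldots,\langle x,h_n\rangle)$ with $f\in C^2_b(\R^n)$, and use that each $h_i$ is an eigenvector of $A^*$, say $A^*h_i=\alpha_i h_i$, so that $T_t^*h_i=e^{\alpha_i t}h_i$. Then \eqref{mehler} yields
$$P_tF(x)=\int_X f\bigl(e^{\alpha_1 t}\langle x,h_1\rangle+\langle y,h_1\rangle,\ldots,e^{\alpha_n t}\langle x,h_n\rangle+\langle y,h_n\rangle\bigr)\mu_t(dy),$$
and differentiating under the integral sign (dominated convergence, the first two derivatives of $f$ being bounded) shows that this is a $C^2_b$ function of $(\langle x,h_1\rangle,\ldots,\langle x,h_n\rangle)$, hence again an element of $\mathcal{F}C^2_A(X)$.

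Next I extend $P_t$ to $L^2(X,\sigma)$. Jensen's inequality combined with the invariance \eqref{invariance} gives $\|P_tf\|_{L^2(\sigma)}\leq \|f\|_{L^2(\sigma)}$ for every $f\in C_b(X)$, and the pointwise continuity $P_tf(x)\to f(x)$ as $t\to 0^+$ together with bounded convergence yields strong continuity at $t=0$ in $L^2$. A standard density and $3\varepsilon$-argument then extends $P_t$ to a $C_0$ contraction semigroup on $L^2(X,\sigma)$, whose generator is $\mathcal{L}_2$. Density of $\mathcal{F}C^2_A(X)$ in $L^2(X,\sigma)$ is immediate: since $\{h_n\}$ is an orthonormal basis of $X$, the coordinate maps $x\mapsto(\langle x,h_1\rangle,\ldots,\langle x,h_n\rangle)$ generate $\mathcal{B}(X)$, so smooth bounded cylindrical functions are $L^2(X,\sigma)$-dense.

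It remains to prove $\mathcal{F}C^2_A(X)\subseteq D(\mathcal{L}_2)$ with $\mathcal{L}_2F=\mathcal{L}F$ for the mixed-topology $C_b$-weak generator $\mathcal{L}$ recalled before the statement, whose explicit form on $\mathcal{F}C^2_A(X)$ is given by \eqref{formula_L}. For $F\in\mathcal{F}C^2_A(X)$, I would split $M$ into its restrictions to $B_1$ and $B_1^c$ (the latter integrable by \eqref{App_cond}) and use the Taylor expansion of $f$ to majorise $|t^{-1}(P_tF-F)|$, uniformly for $t$ in a right neighbourhood of $0$, by a cylindrical function of at most linear growth. Such a majorant is $\sigma$-integrable because $M_\infty$ is a L\'evy measure by Theorem \ref{furhman}, so dominated convergence in $L^2(X,\sigma)$ upgrades the pointwise convergence $t^{-1}(P_tF-F)\to \mathcal{L}F$ to convergence in $L^2$, giving both $F\in D(\mathcal{L}_2)$ and $\mathcal{L}_2F=\mathcal{L}F$. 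With the three ingredients in hand, the core criterion closes the argument. I expect the main technical obstacle to be precisely this last step: producing an explicit $\sigma$-integrable majorant uniform in small $t$, which requires careful bookkeeping of the Taylor remainder together with the estimate \eqref{est_M} extended to $M_\infty$.
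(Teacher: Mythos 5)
Your overall architecture (core criterion: dense $+$ $P_t$-invariant $+$ contained in the domain $\Rightarrow$ core) is exactly the logic the paper uses, and your treatments of invariance (via $T_t^*h_i=e^{\alpha_i t}h_i$), of the $L^2$ extension, and of density (monotone class on cylindrical functions, where the paper instead uses $\tau_m$-sequential density from \cite{GK01}) are all sound. The problem is in the step you yourself flag as the main obstacle, and the gap there is genuine.

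You propose to dominate $\bigl|t^{-1}(P_tF-F)\bigr|$ by a cylindrical function of \emph{at most linear growth} and to conclude via dominated convergence in $L^2(X,\sigma)$, justifying integrability of the majorant ``because $M_\infty$ is a L\'evy measure''. This does not work as stated, for two reasons. First, dominated convergence in $L^2(X,\sigma)$ requires the majorant to be in $L^2(X,\sigma)$, i.e.\ $\sigma$ must have a finite \emph{second} moment if the majorant grows linearly. Second, and more seriously, neither $\sigma(2)<\infty$ nor even $\sigma(1)<\infty$ follows from $M_\infty$ being a L\'evy measure: that condition only controls $\int(1\wedge|x|^2)\,dM_\infty$ and says nothing about moments of $\sigma\leftrightarrow[b_\infty,0,M_\infty]$. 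Indeed the paper devotes a separate proposition in Section \ref{sec_log} to deducing $\sigma(2)<\infty$ from the \emph{additional} assumption $M(2)<\infty$, and in the fractional Ornstein--Uhlenbeck example of Subsection \ref{OUexample} the invariant density behaves like $|x|^{-(d+2s)}$ at infinity, so $\sigma(2)=\infty$ there; your argument would break on the paper's own leading example. The fix is to produce a \emph{bounded} majorant: the drift contribution $t^{-1}(F(T_tx)-F(x))=t^{-1}\int_0^t\langle T_sx,A^*DF(T_sx)\rangle\,ds$ is bounded precisely because the definition of $C^2_A(X)$ requires $x\mapsto\langle x,A^*DF(x)\rangle\in C_b(X)$, and the jump contribution is controlled uniformly for small $t$ by $\|D^2F\|_\infty\int_{B_1}|y|^2M_t(dy)/t$, $\|F\|_\infty M_t(B_1^c)/t$ and \eqref{App_cond} via \eqref{est_M}. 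This is essentially what the paper outsources: it cites \cite[Theorem 5.1(2)]{App07} for $\mathcal{F}C^2_A(X)\subseteq D(\mathcal{L}_m)$ in the mixed topology and then invokes \cite[Proposition 2.3]{GK01}, which gives $\sup_n\|t_n^{-1}(P_{t_n}F-F)-G\|_\infty<\infty$ together with uniform convergence on compacts, so the dominating function is a constant and no moment hypotheses on $\sigma$ are ever needed. With the majorant corrected to a bounded one, your argument closes; as written, it does not.
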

\begin{proof}
We point out that $\mathcal{F}C^2_A(X)$ is invariant with respect to $P_t$ (see \cite[Theorem 5.2]{App07}
and \cite[Remark 5.11]{PriTra16}),
so to conclude we just need to show that $\mathcal{F}C^2_A(X)$ is contained in the domain of the generator
$\mathcal{L}$ in $L^2(X,\sigma)$ and that it is dense in $L^2(X,\sigma)$.

By \cite[Theorem 5.1(2)]{App07} the space $\mathcal{F}C^2_A(X)$
is contained in $D(\mathcal{L}_m)$, the domain of the generator of $P_t$ in $C_b(X)$ equipped with
the mixed topology. This means that for any $F\in\mathcal{F}C^2_A(X)$ there exists $G\in C_b(X)$ such that
\begin{align*}
\tau_m\text{-}\lim_{t\ra 0}\frac{P_tF-F}{t}=G.
\end{align*}
Let $\{t_n\}_{n\in\N}\in(0,\infty)$ be a sequence converging to zero. By \cite[Proposition 2.3]{GK01} the
sequence $((1/t_n)(P_{t_n}F-F)-G)_{n\in\N}$ is uniformly convergent to zero on compact subsets of
$X$ and
\begin{align*}
\sup_{n\in\N}\norm{\frac{P_{t_n}F-F}{t_n}-G}_\infty<\infty.
\end{align*}
By the dominated convergence theorem we get that the sequence $((1/t_n)(P_{t_n}F-F)-G)_{n\in\N}$
converges to zero in $L^2(X,\sigma)$. Since the argument is independent on the choice of the sequence
$(t_n)_{n\in\N}$ we obtain
\begin{align*}
\lim_{t\ra 0}\norm{\frac{P_tF-F}{t}-G}_{L^2(X,\sigma)},
\end{align*}
hence $F$ belongs to the domain of $\mathcal{L}$ in $L^2(X,\sigma)$ and $\mathcal{L}F=\mathcal{L}_mF$.

The fact that $\mathcal{F}C^2_A(X)$ is dense in $L^2(X,\sigma)$ can be proved by using
that $\mathcal{F}C^2_A(X)$ is $\tau_m$-sequentially dense in $C_b(X)$ (see \cite[Lemma 2.6]{GK01}) and
the same arguments as above.
\end{proof}

The following equality will be useful later on.

\begin{lemm}
Assume that Hypotheses $\ref{hyp_App}$ and $\ref{base}$ hold true. For every $f\in \mathcal{F}C^2_A(X)$ and
every $\Phi\in C^2(\R)$ we have
\begin{align}\label{form_eu}
\int_X\Phi'(f)\cdot\mathcal{L}f d\sigma&=\int_X\int_X \Big[\Phi(f(x))-\Phi(f(x+y))+\Phi'(f(x))\Big(f(x+y)-f(x)\Big)\Big]M(dy)\sigma(dx).
\end{align}
\end{lemm}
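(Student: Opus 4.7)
The plan is to apply the invariance identity \eqref{inv_L} from Theorem \ref{furhman} to the composite function $\Phi\circ f$ and then read off the claimed identity by computing the discrepancy between $\mathcal{L}(\Phi\circ f)$ and $\Phi'(f)\,\mathcal{L}f$ via the explicit formula \eqref{formula_L}.

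First I would check that $\Phi\circ f\in\mathcal{F}C^2_A(X)$. Indeed, if $f(x)=g(\gen{x,h_1},\ldots,\gen{x,h_n})$ with $g\in C^2_b(\R^n)$, then $\Phi\circ f$ is represented by $\Phi\circ g$, which lies in $C^2_b(\R^n)$ because $g$ takes values in a compact interval and $\Phi\in C^2(\R)$. Hence Theorem \ref{furhman} applies and gives $\int_X\mathcal{L}(\Phi\circ f)\,d\sigma=0$.

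Next, using the chain rule $D(\Phi\circ f)(x)=\Phi'(f(x))Df(x)$, together with \eqref{formula_L}, the drift term $\gen{Ax,\cdot}$ and the compensator $\gen{\cdot,y}\chi_{B_1}(y)$ combine so that
\begin{align*}
\mathcal{L}(\Phi\circ f)(x)-\Phi'(f(x))\mathcal{L}f(x)=\int_X\bigl[\Phi(f(x+y))-\Phi(f(x))-\Phi'(f(x))(f(x+y)-f(x))\bigr]M(dy).
\end{align*}
Integrating this equality against $\sigma$ and using $\int_X\mathcal{L}(\Phi\circ f)\,d\sigma=0$ yields \eqref{form_eu} after a Fubini interchange.

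The only real technical point — and the place where the argument must be carried out carefully — is justifying that the inner integrand in \eqref{form_eu} is absolutely $M$-integrable uniformly in $x$, so that the splitting above is legitimate term by term and Fubini applies. I would do this by a Taylor expansion of $\Phi$ around $f(x)$: for $|y|\le 1$, the smoothness of $f$ and $\Phi$ gives
\begin{align*}
|\Phi(f(x+y))-\Phi(f(x))-\Phi'(f(x))(f(x+y)-f(x))|\le \tfrac12\|\Phi''\|_{\infty,R}\,|f(x+y)-f(x)|^2\le C|y|^2,
\end{align*}
where $R$ is the compact range of $f$; for $|y|>1$, boundedness of $f$, $\Phi$ and $\Phi'$ on $R$ gives a uniform bound. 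Hence the integrand admits an $L^1(M)$ majorant of the form $C(|y|^2\wedge 1)$ by \eqref{LevyM}, independently of $x$, which legitimises the rearrangement and the application of Fubini.
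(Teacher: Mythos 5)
Your argument is correct and follows essentially the same route as the paper: apply the invariance identity \eqref{inv_L} to $\Phi\circ f$ (after checking it lies in $\mathcal{F}C^2_A(X)$), and use the chain rule in \eqref{formula_L} to identify $\mathcal{L}(\Phi\circ f)-\Phi'(f)\mathcal{L}f$ with the integral of the second-order Taylor remainder against $M$. Your explicit $C(|y|^2\wedge 1)$ majorant just spells out the integrability justification that the paper leaves to the remark following \eqref{formula_L}.
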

\begin{proof}
By the invariance relation \eqref{inv_L} it suffices to prove that
\begin{align}\label{id_com}
\mathcal{L}(\Phi\circ f)=(\Phi'\circ f)(\mathcal{L}f)+\int_X
\left[(\Phi\circ f)(\cdot+y)-(\Phi\circ f)-(\Phi'\circ f)\Big(f(\cdot+y)-f\Big)\right]M(dy)
\end{align}
and to observe that $\Phi \circ f$ belongs to $\mathcal{F}C^2_A(X)$.
Formula \eqref{id_com} easily follows from \eqref{formula_L}. Indeed, we have
\begin{align*}
[\mathcal{L}(\Phi\circ f)](x) &=[(\Phi'\circ f)(x)]\langle Ax,Df(x)\rangle
\\
&+\int_X\Big[(\Phi\circ f))(x+y)-(\Phi\circ f))(x)-[(\Phi'\circ f)(x)]
\gen{D f(x),y}\chi_{B_1}(y)\Big]M(dy).
\end{align*}
Now adding and subtracting $\int_X [(\Phi'\circ f)(x)][f(x+y)-f(x)]M(dy)$ we get \eqref{id_com}.
\end{proof}

In the following proposition we collect the main properties of the semigroup $P_t$ in the space
$L^p(X,\sigma)$, $p\in [1,\infty)$.

\begin{prop}
Assume that Hypotheses $\ref{hyp_App}$ and $\ref{base}$ hold true. The semigroup $P_t$ can be extended to a contractive strongly continuous semigroup (still denoted by $P_t$)
on $L^p(X,\sigma)$ for any $1\le p<\infty$.
\end{prop}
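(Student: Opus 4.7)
The proof follows the standard scheme: establish contractivity on $B_b(X)$, extend by density, and verify strong continuity on a dense subspace. For contractivity, since $\mu_t$ is a probability measure, Jensen's inequality applied to the convex function $s\mapsto|s|^p$ yields, for every $f\in B_b(X)$ and $x\in X$,
\begin{align*}
|P_tf(x)|^p=\left|\int_X f(T_tx+y)\mu_t(dy)\right|^p\le \int_X |f(T_tx+y)|^p\mu_t(dy)=P_t(|f|^p)(x).
\end{align*}
Integrating against $\sigma$ and using the invariance relation \eqref{invariance} gives $\|P_tf\|_{L^p(X,\sigma)}\le\|f\|_{L^p(X,\sigma)}$.

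Applied to a difference $f-g\in B_b(X)$, the same estimate shows that $P_t$ respects $\sigma$-almost-everywhere equality and therefore descends to a well-defined contraction on the image of $B_b(X)$ in $L^p(X,\sigma)$. Since $\sigma$ is a Borel probability measure on the separable Hilbert space $X$, $B_b(X)$ (indeed even $C_b(X)$) is dense in $L^p(X,\sigma)$, so the contraction extends uniquely to all of $L^p(X,\sigma)$; the semigroup identity $P_tP_s=P_{t+s}$ transfers from $B_b(X)$, where it follows from \eqref{semigroup_prop}, to the extension by density and contractivity.

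For strong continuity at $t=0$, the uniform bound $\|P_t\|_{\mathcal{L}(L^p(X,\sigma))}\le 1$ together with a standard $\eps/3$-argument reduces the problem to checking $\|P_tf-f\|_{L^p(X,\sigma)}\to 0$ as $t\to 0^+$ for $f$ in the dense subspace $C_b(X)$. For such $f$, the continuity of the map $(t,x)\mapsto P_tf(x)$ recalled in Section \ref{SecPrelim} yields the pointwise convergence $P_tf(x)\to f(x)$, while the uniform majorant $|P_tf|\le\|f\|_\infty$ is $\sigma$-integrable to any power because $\sigma$ is a probability measure; dominated convergence then gives the required $L^p$-convergence. Strong continuity at an arbitrary $t>0$ follows from the semigroup law and the contraction property.

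The only genuinely delicate step is the passage to equivalence classes in $L^p(X,\sigma)$: this rests crucially on the invariance of $\sigma$ proved in Theorem \ref{furhman}, which is what converts the pointwise Jensen bound into the $L^p$-contractivity that both defines the extension and guarantees its well-posedness. Everything else is a routine application of density together with the contractive semigroup machinery.
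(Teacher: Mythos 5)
Your proposal is correct and follows essentially the same route as the paper: Jensen's inequality combined with the invariance of $\sigma$ gives the $L^p$-contraction on a dense class of bounded functions, density of $C_b(X)$ in $L^p(X,\sigma)$ yields the extension, and strong continuity is obtained on the dense class from the joint continuity of $(t,x)\mapsto P_tf(x)$ together with dominated convergence, then transferred to all of $L^p(X,\sigma)$ by the standard three-term estimate using contractivity. The only differences are cosmetic: you start from $B_b(X)$ rather than $C_b(X)$ and make explicit the (routine) well-definedness on $\sigma$-equivalence classes, which the paper leaves implicit.
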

\begin{proof}
The Jensen inequality, formula \eqref{mehler} and the invariance property \eqref{invariance} yield that,
for any $f \in C_b(X)$
\begin{align*}
\int_X |P_t f|^p d\sigma \le \int_X P_t|f|^pd\sigma = \int_X |f|^pd\sigma
\end{align*}
whence $\|P_t f\|_{L^p(X,\sigma)}\le \|f\|_{L^p(X,\sigma)}$ for any $f \in C_b(X)$. Moreover, since the
measure $\sigma$ is a probability Borel measure, the space $C_b(X)$ is dense in $L^p(X,\sigma)$ for any
$p\in [1,\infty)$ (see Lemma \ref{lusin BUC}). Thus we can extend $P_t$ to a bounded
linear operator in $L^p(X,\sigma)$ with $\|P_t\|_{\mathcal{L}(L^p(X,\sigma))}\le 1$.
Now, let us prove that $P_t$ is strongly continuous in $L^p(X,\sigma)$.
To this aim, notice that for $f \in C_b(X)$ the function $(t, x) \mapsto P_t f(x)$ is continuous in
$[0, \infty)\times X$ (see \cite[Lemma 2.1]{Bog3}). This fact, estimate $\|P_t f\|_\infty\le \|f\|_\infty$
together with the dominated convergence theorem imply that
$\|P_t f-f\|_{L^p(X,\sigma)}$ vanishes as $t \to 0^+$ for any $f \in C_b(X)$ and $p \in [1,\infty)$.
To conclude we argue by approximation. Let $ f \in L^p(X,\sigma)$ and $(f_n)_n \subseteq C_b(X)$ converging
to $f$ in $L^p(X,\sigma)$ as $n\to \infty$. Then,
\begin{align}\label{c_0}
\|P_t f-f\|_{L^p(X,\sigma)}&\le \|P_t (f-f_n)\|_{L^p(X,\sigma)}+\|P_t f_n-f_n\|_{L^p(X,\sigma)}
+\|f_n-f\|_{L^p(X,\sigma)}
\notag\\
& \le \|P_t f_n-f_n\|_{L^p(X,\sigma)}+2\|f_n-f\|_{L^p(X,\sigma)},
\end{align}
where in the last line we used the contractivity of $P_t$ in $L^p(X,\sigma)$. Fix $\varepsilon>0$ and
let $n_0\in \N$ be such that $\|f_{n_0}-f\|_{L^p(X,\sigma)}\le \varepsilon/4$. The first part of the
proof yields the existence of $t_0>0$ such that $\|P_t f_{n_0}-f_{n_0}\|_{L^p(X,\sigma)}\le \varepsilon/2$
for any $t \in (0,t_0)$. Thus, writing estimate \eqref{c_0} with $n$ replaced by $n_0$, we conclude that
$\|P_t f-f\|_{L^p(X,\sigma)}\le \varepsilon$ for any $t \in (0, t_0)$ and this completes the proof.
\end{proof}

The next result concerns the asymptotic behaviour of $P_t$ as $t\to\infty$. For any
$f \in L^1(X,\sigma)$ we denote by $m_{\sigma}(f)$
the mean of $f$ with respect to $\sigma$, i.e.,
\begin{equation*}
m_\sigma(f):= \int_X fd\sigma.
\end{equation*}

\begin{lemm}
Assume that Hypotheses $\ref{hyp_App}$ and $\ref{base}$ hold true. For any $f\in {\rm Lip}_b(X)$, $P_t f$ converges pointwise to $m_\sigma(f)$ as $t \to \infty$, i.e.,
\begin{equation*}
\lim_{t \to \infty}P_t f(x)= m_\sigma(f),\qquad x \in X;
\end{equation*}
and if $f>0$ then
\begin{equation}\label{asympt}
\lim_{t \to \infty}\int_X (P_t f)\log(P_t f) d\sigma= m_\sigma(f)\log(m_\sigma(f)).
\end{equation}
\end{lemm}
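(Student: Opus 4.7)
The plan is to prove the two convergence statements separately, using the weak-star convergence $\mu_t \to \sigma$ granted by Theorem \ref{furhman} together with Hypothesis \ref{base}(iii). The second (entropy-type) convergence will then reduce to the first by dominated convergence.

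For the pointwise convergence $P_tf(x)\to m_\sigma(f)$, I would first split
\begin{align*}
P_tf(x)-m_\sigma(f)=\int_X\bigl[f(T_tx+y)-f(y)\bigr]\mu_t(dy)+\Bigl(\int_X f\,d\mu_t-\int_X f\,d\sigma\Bigr).
\end{align*}
The Lipschitz continuity of $f$ bounds the first integrand by $L|T_tx|$, where $L=\mathrm{Lip}(f)$, so that term is dominated by $L|T_tx|$, which vanishes as $t\to\infty$ by Hypothesis \ref{base}(iii). Since $f\in C_b(X)\subseteq \mathrm{Lip}_b(X)$ and $\mu_t$ converges weakly-star to $\sigma$ (the last assertion of Theorem \ref{furhman}), the second term vanishes as well. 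This gives $P_tf(x)\to m_\sigma(f)$ for every $x\in X$.

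For \eqref{asympt}, observe that $f>0$ and boundedness give $0<\inf f\cdot\one \leq P_tf\leq \|f\|_\infty$ pointwise, or at least $0\leq P_tf\leq \|f\|_\infty$ with strict positivity of the integrand $(P_tf)\log(P_tf)$ well-defined via the convention $0\log 0=0$. Since the function $u\mapsto u\log u$ is continuous on $[0,\|f\|_\infty]$, it is bounded there by some constant $K=K(f)$. Consequently
\begin{equation*}
\bigl|(P_tf)(x)\log(P_tf)(x)\bigr|\leq K,\qquad x\in X,\ t>0,
\end{equation*}
and the constant $K$ is $\sigma$-integrable because $\sigma$ is a probability measure. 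On the other hand, the pointwise limit $P_tf(x)\to m_\sigma(f)$ established above combined with the continuity of $u\mapsto u\log u$ on $(0,\|f\|_\infty]$ (with $m_\sigma(f)>0$ since $f>0$) yields $(P_tf)(x)\log(P_tf)(x)\to m_\sigma(f)\log m_\sigma(f)$ for every $x$. An application of the dominated convergence theorem then delivers \eqref{asympt}.

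There is no serious obstacle here: the only subtle point is ensuring the integrand $(P_tf)\log(P_tf)$ is uniformly bounded in $t$ and $x$, which follows simply from the uniform bound $0\leq P_tf\leq\|f\|_\infty$ and the continuity of $u\log u$ on a compact interval. The positivity hypothesis $f>0$ is used only to guarantee that the limit value $m_\sigma(f)$ lies in the domain of continuity of $u\log u$ and that the integrand is well-defined without ambiguity.
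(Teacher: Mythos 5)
Your proof is correct and follows essentially the same route as the paper: the same decomposition of $P_tf(x)-m_\sigma(f)$ into a Lipschitz term controlled by $|T_tx|$ via Hypothesis \ref{base}(iii) and a term controlled by the weak-star convergence $\mu_t\to\sigma$ from Theorem \ref{furhman}, followed by dominated convergence for \eqref{asympt} using the uniform bound on $u\mapsto u\log u$ over $[0,\|f\|_\infty]$. The only blemish is the reversed inclusion ``$C_b(X)\subseteq \mathrm{Lip}_b(X)$'' (you mean $\mathrm{Lip}_b(X)\subseteq C_b(X)$), which does not affect the argument.
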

\begin{proof}
Let $f:X \to \mathbb{R}$ be a bounded Lipschitz continuous function, $x \in X$ and $\varepsilon>0$.
The weak-star convergence of $\mu_t$ to $\sigma$ as $t\to\infty$ (see Theorem \ref{furhman})
implies that there exists $t_0>0$ such that for every $t\geq t_0$
\begin{align}\label{erg1}
\abs{\int_Xfd\mu_t-\int_Xfd\sigma}<\frac{\eps}{2}.
\end{align}
Analogously, as $T_tx$ vanishes as $t\to\infty$, there is $t_1>0$ such that for every
$t\geq t_1$
\begin{align}\label{erg2}
|T_tx|<\frac{\eps}{2L},
\end{align}
where $L$ is the Lipschitz constant of $f$.
Thus, for every $t\geq \max\{t_0, t_1\}$, by \eqref{erg1} and \eqref{erg2}, we have
\begin{align*}
\left|P_tf(x)-\int_Xfd\sigma\right|&=\abs{\int_Xf(T_tx+z)\mu_t(dz)-\int_Xf(z)\sigma(dz)}
\\
&\leq \abs{\int_X\left(f(T_tx+z)-f(z)\right)\mu_t(dz)}+\abs{\int_Xf(z)\mu_t(dz)-\int_Xf(z)\sigma(dz)}
\\
&\leq \int_X\left|f(T_tx+z)-f(z)\right|\mu_t(dz)+\frac{\eps}{2}
\\
&\le L \int_X|T_tx|\mu_t(dz)+\frac{\eps}{2} <\eps.
\end{align*}
If $f>0$ the function $x \mapsto (P_t f)(x)\log((P_t f)(x))$ is well defined and bounded by
$\|f\|_\infty\log\|f\|_\infty$, as $P_t$ is contractive and preserves positivity. Therefore, by the dominated
convergence theorem we get \eqref{asympt}.
\end{proof}

\begin{cor}
Assume that Hypotheses $\ref{hyp_App}$ and $\ref{base}$ hold true. For any $f \in L^p(X,\sigma)$, it holds that
\begin{equation}\label{conv_lp}
\lim_{t \to \infty}\|P_t f-m_\sigma(f)\|_{L^p(X,\sigma)}=0.
\end{equation}
\end{cor}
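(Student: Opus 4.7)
The plan is a two-step argument: first upgrade the pointwise convergence from the previous lemma to $L^p$-convergence on the dense subclass $\mathrm{Lip}_b(X)$, then extend to all of $L^p(X,\sigma)$ by an $\varepsilon/3$ argument exploiting the contractivity of $P_t$ established in the previous proposition.

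For the first step, fix $f\in\mathrm{Lip}_b(X)$. By the previous lemma, $P_tf(x)\to m_\sigma(f)$ pointwise on $X$ as $t\to\infty$. Moreover, since $P_t$ preserves the sup-norm, $|P_tf(x)-m_\sigma(f)|\le 2\|f\|_\infty$ for every $t\ge 0$ and $x\in X$. Because $\sigma$ is a probability measure, the constant $(2\|f\|_\infty)^p$ is $\sigma$-integrable, so the dominated convergence theorem yields
\begin{equation*}
\lim_{t\to\infty}\int_X|P_tf-m_\sigma(f)|^p\,d\sigma=0,
\end{equation*}
which is \eqref{conv_lp} on $\mathrm{Lip}_b(X)$.

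For the second step, I would invoke the density of $\mathrm{Lip}_b(X)$ in $L^p(X,\sigma)$ for $p\in[1,\infty)$: since $C_b(X)$ is dense in $L^p(X,\sigma)$ (by Lemma \ref{lusin BUC}, already used in the previous proposition) and bounded continuous functions on a metric space can be approximated in $L^p$ by bounded Lipschitz functions (e.g.\ via inf-convolution $g_n(x)=\inf_{y\in X}\{g(y)+n|x-y|\}$), $\mathrm{Lip}_b(X)$ is $L^p$-dense. Given $f\in L^p(X,\sigma)$ and $\varepsilon>0$, pick $g\in\mathrm{Lip}_b(X)$ with $\|f-g\|_{L^p(X,\sigma)}<\varepsilon/3$. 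By contractivity of $P_t$ on $L^p(X,\sigma)$,
\begin{equation*}
\|P_t f-P_t g\|_{L^p(X,\sigma)}\le\|f-g\|_{L^p(X,\sigma)}<\varepsilon/3,
\end{equation*}
while Hölder's inequality (with $\sigma(X)=1$) gives
\begin{equation*}
|m_\sigma(f)-m_\sigma(g)|\le\|f-g\|_{L^1(X,\sigma)}\le\|f-g\|_{L^p(X,\sigma)}<\varepsilon/3,
\end{equation*}
so $\|m_\sigma(f)-m_\sigma(g)\|_{L^p(X,\sigma)}<\varepsilon/3$. By the first step there exists $t_0>0$ with $\|P_t g-m_\sigma(g)\|_{L^p(X,\sigma)}<\varepsilon/3$ for every $t\ge t_0$. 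The triangle inequality
\begin{equation*}
\|P_t f-m_\sigma(f)\|_{L^p(X,\sigma)}\le\|P_t f-P_t g\|_{L^p(X,\sigma)}+\|P_t g-m_\sigma(g)\|_{L^p(X,\sigma)}+\|m_\sigma(g)-m_\sigma(f)\|_{L^p(X,\sigma)}
\end{equation*}
then gives $\|P_t f-m_\sigma(f)\|_{L^p(X,\sigma)}<\varepsilon$ for $t\ge t_0$, proving \eqref{conv_lp}.

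The only delicate ingredient is the $L^p$-density of $\mathrm{Lip}_b(X)$, but this is standard on a separable metric space with a Borel probability measure; no difficulty is expected. Everything else is a direct packaging of the preceding lemma with the contractivity of $P_t$.
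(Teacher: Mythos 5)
Your proof is correct and follows essentially the same route as the paper: dominated convergence (using the pointwise convergence and the uniform bound) for $f\in{\rm Lip}_b(X)$, then an approximation argument using the density of ${\rm Lip}_b(X)$ in $L^p(X,\sigma)$ together with the contractivity of $P_t$. The only cosmetic difference is that the paper cites its appendix result (Proposition \ref{approx}) for the $L^p$-density of bounded Lipschitz functions, whereas you re-derive it via inf-convolution; both are fine.
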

\begin{proof}
Formula \eqref{conv_lp} easily follows from the dominated convergence theorem for bounded and Lipschitz
continuous functions. The general case follows by approximation from Proposition \ref{approx}.
\end{proof}

\section{A logarithmic Sobolev type inequality and its consequences}\label{sec_log}

In this section we prove a logarithmic Sobolev type inequality satisfied by
$\sigma$, the unique invariant measure for $P_t$ provided by Theorem \ref{furhman}. For any positive function
on $X$ we denote by
$$
{\rm Ent}_\sigma(f):=\pa{\int_X f \log f d\sigma}- m_\sigma(f)\log m_\sigma (f)
$$
the entropy of $f$ with respect to $\sigma$.
In order to prove the desired logarithmic Sobolev type inequality we need further assumptions on the L\'{e}vy
measure $M$. Similar assumptions are considered in \cite[Hypothesis (H4) and Lemma 2.1(2)]{RW03}.

\begin{hyp}\label{hyp_wang}
We assume that $M$ is a L\'{e}vy measure on $X$ and
\begin{enumerate}[\rm (i)]
\item\label{hyp_wang1} there exists a function $h:(0,\infty)\ra(0,\infty)$ such that, for every $t>0$,
\[
h(t)M-M\circ T_t^{-1}
\]
is a positive measure;
\item the function $h$ belongs to $L^1((0,\infty))$.
\end{enumerate}
\end{hyp}

In the following lemma we prove an estimate that plays the role of pointwise gradient estimates
in the local case.

\begin{lemm}
Assume that Hypotheses \ref{hyp_App}, \ref{base} and \ref{hyp_wang}\eqref{hyp_wang1} hold true. Then,
\begin{align}\label{ineq_wang}
\int_X\Big|(P_tf)(x+y)-(P_tf)(x)\Big|^p M(dy)\leq
h (t)\int_X\Big|P_t\Big(f(\cdot+y)-f(\cdot)\Big)(x)\Big|^p M(dy).
\end{align}
for every $p \in [1,\infty)$, $t>0$, $x\in X$ and $f\in B_b(X)$.
\end{lemm}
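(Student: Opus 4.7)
The plan is to reduce everything to a change-of-variables identity and then apply Hypothesis \ref{hyp_wang}\eqref{hyp_wang1} directly. The key observation is that, by the very definition of the generalised Mehler semigroup, translations of the argument $x$ turn into translations inside the integrand, shifted by $T_t$. First I would write, for $f\in B_b(X)$, $x,y\in X$ and $t>0$,
\begin{align*}
(P_tf)(x+y)-(P_tf)(x) &= \int_X \bigl[f(T_t x+T_t y+z)-f(T_t x+z)\bigr]\mu_t(dz)\\
&= P_t\bigl[f(\,\cdot\, +T_t y)-f(\,\cdot\,)\bigr](x),
\end{align*}
which is the only nontrivial step; it simply uses linearity of $T_t$ and the defining formula \eqref{mehler}.

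Next I would raise both sides to the $p$-th power in absolute value and integrate against $M(dy)$. The right-hand side then reads
\[
\int_X \bigl| P_t\bigl(f(\,\cdot\, +T_t y)-f(\,\cdot\,)\bigr)(x)\bigr|^p M(dy),
\]
which, by the very definition of image measure, equals
\[
\int_X \bigl| P_t\bigl(f(\,\cdot\, +z)-f(\,\cdot\,)\bigr)(x)\bigr|^p (M\circ T_t^{-1})(dz).
\]

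Finally, since the integrand is nonnegative and Hypothesis \ref{hyp_wang}\eqref{hyp_wang1} asserts that $h(t)M-M\circ T_t^{-1}$ is a positive measure, I would conclude
\[
\int_X g(z)(M\circ T_t^{-1})(dz)\le h(t)\int_X g(z)M(dz)
\]
for every nonnegative Borel function $g$, and apply this with $g(z)=|P_t(f(\,\cdot\,+z)-f(\,\cdot\,))(x)|^p$. Putting the three steps together yields \eqref{ineq_wang}.

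There is no genuine obstacle: the whole argument is the translation identity plus the domination hypothesis on $M$. The only point requiring a little care is the measurability and boundedness needed to justify Fubini in the image-measure step, but this is ensured by $f\in B_b(X)$, by which $f(\,\cdot\,+z)-f(\,\cdot\,)$ is bounded Borel uniformly in $z$, so that $P_t$ acts on it pointwise in a measurable way.
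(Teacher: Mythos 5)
Your proposal is correct and follows essentially the same route as the paper: the translation identity $(P_tf)(x+y)-(P_tf)(x)=P_t\big(f(\cdot+T_ty)-f(\cdot)\big)(x)$, the change of variables turning $\int_X(\cdots)M(dy)$ into an integral against $M\circ T_t^{-1}$, and then the domination $M\circ T_t^{-1}\le h(t)M$ from Hypothesis \ref{hyp_wang}\eqref{hyp_wang1} applied to the nonnegative integrand. No further comparison is needed.
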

\begin{proof}
Using \eqref{mehler}, the Jensen inequality and Hypothesis \ref{hyp_wang}\eqref{hyp_wang1} we get
\begin{align*}
&\int_X\Big|(P_tf)(x+y)-(P_tf)(x)\Big|^p M(dy)
\\
=&\int_X\abs{\int_X\Big (f(T_t x+ T_ty+z)-f(T_t x+z)\Big)d\mu_t(z)}^p M(dy)
\\
=&\int_X\abs{\int_X\Big (f(T_t x+w+z)-f(T_t x+z)\Big)\mu_t(dz)}^p d(M\circ T_t^{-1})(w)
\\
\leq & h (t)\int_X\abs{\int_X\Big (f(T_t x+w+z)-f(T_t x+z)\Big)\mu_t(dz)}^p M(dw)
\\
=&h (t)\int_X|P_t\big(f(\cdot+w)-f(\cdot)\big)(x)|^p M(dw).\qedhere
\end{align*}
\end{proof}

The main result of this section is the following estimate of the entropy of $f$. As pointed out in the
Introduction, on the right hand side the gradient of $f$, typical of the logarithmic Sobolev inequalities
available in the context of semigroups generated by {\em local} operators, has to be replaced by the integral of the relative increment because of the {\em nonlocal} effects.

\begin{thm}\label{thm_neve}
Assume that Hypotheses $\ref{hyp_App}$, $\ref{base}$ and $\ref{hyp_wang}$ hold true. Then, for every
$p \in [1,\infty)$ and $f \in \mathcal{F}C^2_A(X)$ with positive infimum, the following estimate
\begin{align}\label{log_sob}
{\rm Ent}_\sigma(f^p)\leq
C\int_X\int_X\frac{|f^p(x+y)-f^p(x)|^2}{f^p(x)} M(dy)\sigma(dx),
\end{align}
holds true with $C= \|h \|_{L^1((0,\infty))}$.
\end{thm}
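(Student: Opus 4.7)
The plan is to interpolate between $g:=f^p$ and its mean $m_\sigma(g)$ through the semigroup $P_t$, starting from
\[
\mathrm{Ent}_\sigma(g)\;=\;-\int_0^\infty \frac{d}{dt}\!\int_X (P_tg)\log(P_tg)\,d\sigma\,dt,
\]
whose convergence at infinity is provided by \eqref{asympt}. Observe that $g\in \mathcal{F}C^2_A(X)$ still has positive infimum (since $f$ does and $p\ge 1$); because $\mu_t$ is a probability measure and $\mathcal{F}C^2_A(X)$ is $P_t$-invariant, the same holds for $P_tg$ for every $t\ge 0$. To compute the time derivative I would apply the formula \eqref{form_eu} to $P_tg$ with $\Phi(u)=u\log u$ (taken in a $C^2(\mathbb{R})$ extension outside the range of $P_tg$). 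Writing $a=(P_tg)(x)$ and $b=(P_tg)(x+y)$, a direct algebraic manipulation of the integrand on the right of \eqref{form_eu} yields
\[
\frac{d}{dt}\!\int_X (P_tg)\log(P_tg)\,d\sigma \;=\; -\!\int_X\!\!\int_X a\,\psi(b/a)\,M(dy)\,\sigma(dx), \qquad \psi(r):=r\log r-r+1\ge 0.
\]

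An elementary calculus check gives $\psi(r)\le (r-1)^2$ for every $r>0$, so after inserting this bound the integrand above is dominated by $|(P_tg)(x+y)-(P_tg)(x)|^2/(P_tg)(x)$. The decisive step is to pull this fraction inside $P_t$: writing
\[
(P_tg)(x+y)-(P_tg)(x)\;=\;\int_X\bigl[g(T_tx+T_ty+z)-g(T_tx+z)\bigr]\mu_t(dz)
\]
and applying Cauchy--Schwarz with weight $1/g(T_tx+z)$ (legitimate since $g>0$ pointwise), I obtain the pointwise estimate
\[
\frac{|(P_tg)(x+y)-(P_tg)(x)|^2}{(P_tg)(x)}\;\le\; P_t\!\left(\frac{|g(\cdot+T_ty)-g(\cdot)|^2}{g(\cdot)}\right)(x).
\]

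To close the argument I integrate in $y$ against $M(dy)$ and change variables $w=T_ty$, so that $M(dy)$ is transported into $(M\circ T_t^{-1})(dw)\le h(t)M(dw)$ by Hypothesis \ref{hyp_wang}(i); integrating next in $x$ against $\sigma$ and invoking its invariance under $P_t$ (Fubini being legal by positivity) produces
\[
\int_X\!\!\int_X \frac{|(P_tg)(x+y)-(P_tg)(x)|^2}{(P_tg)(x)} M(dy)\,\sigma(dx)\;\le\; h(t)\int_X\!\!\int_X \frac{|g(x+w)-g(x)|^2}{g(x)} M(dw)\,\sigma(dx).
\]
A final integration in $t$, using $h\in L^1((0,\infty))$, yields \eqref{log_sob} with $C=\|h\|_{L^1((0,\infty))}$. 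The main technical obstacle I foresee is the pairing of the pointwise bound $\psi(r)\le(r-1)^2$ with the Cauchy--Schwarz step that preserves the denominator $g(x)$; without this last trick one would only control the entropy by a Dirichlet-type form with a prefactor $1/\inf g$, which would destroy the correct scaling $g\mapsto \lambda g$ and prevent the passage to sharp constants.
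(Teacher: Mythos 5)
Your proposal is correct and follows essentially the same route as the paper: differentiate $t\mapsto\int_X(P_tf^p)\log(P_tf^p)\,d\sigma$ via \eqref{form_eu}, bound the resulting integrand by $|(P_tg)(x+y)-(P_tg)(x)|^2/(P_tg)(x)$ through the elementary inequality $r\log r-r+1\le(r-1)^2$, pull the quotient inside $P_t$ by weighted Cauchy--Schwarz, invoke Hypothesis \ref{hyp_wang} and the invariance of $\sigma$, and integrate in $t$ using \eqref{asympt}. The only (immaterial) difference is that you apply Cauchy--Schwarz before the change of variables $w=T_ty$ and the comparison $M\circ T_t^{-1}\le h(t)M$, whereas the paper performs the measure comparison first in the separate Lemma yielding \eqref{ineq_wang} and then applies the H\"older step.
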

\begin{proof}
Let $f$ be as in the statement. It is not restrictive to assume also that $\sup f \le 1$. Indeed,
if this is not the case we consider $f/\|f\|_\infty$ in place of $f$.
Thus, consider the function $F:(0,\infty)\ra\R$ defined as
\[
F(t):=\int_X (P_t f^p)\log(P_t f^p)d\sigma.
\]
The function $(t,x)\mapsto \psi(t,x):=(P_t f^p(x))\log(P_t f^p(x))$ is bounded and continuously
differentiable in $[0,\infty)\times X$ since $P_t f^p$ belongs to $\mathcal{F}C^2_A(X)$ and
takes values in $[(\inf f)^p, 1]$ for any $t,x$ as above (see \eqref{mehler}). Moreover, since
$$
\frac{\partial}{\partial t}\psi(t,\cdot)=(\log(P_t f^p)+1)\mathcal{L}(P_t f^p)
$$
belongs to $\mathcal{F}C_b(X)$, the function $F$ is differentiable and its derivative is given by
\begin{align*}
F'(t)&=\int_X (\mathcal{L}(P_t f^p))\log(P_t f^p)d\sigma+\int_X \mathcal{L}(P_t f^p) d\sigma=\int_X (\mathcal{L}(P_t f^p))\log(P_t f^p)d\sigma
\\
&=-\int_X\int_X \bigg[(P_tf^p)(x+y)\log (P_tf^p)(x+y)-(P_tf^p)(x+y)-(P_tf^p)(x)\log (P_tf^p)(x)
\\
&\qquad\qquad\phantom{a1}+(P_tf^p)(x)-\Big((P_tf^p)(x+y)-(P_tf^p)(x)\Big)\log (P_tf^p)(x)\bigg]M(dy)\sigma(dx).
\end{align*}
where we used \eqref{inv_L} and \eqref{form_eu} with $\Phi(\xi)= \xi\log \xi-\xi$. Notice that
for every $r,s>0$ the following inequality holds
\[
r\log r-r-s\log s+s-(r-s)\log r\leq \frac{(r-s)^2}{s}.
\]
Indeed, multiplying by $s^{-1}$ and setting $t:=rs^{-1}$ it is reduced to the elementary estimate
$\log t\leq t-1$, $t >0$. By this last inequality and \eqref{ineq_wang} we get
\begin{align}\label{Orpheus}
F'(t)&\geq -\int_X\frac{1}{(P_tf^p)(x)}\int_X\Big((P_tf^p)(x+y)-(P_tf^p)(x)\Big)^2 M(dy)\sigma(dx)
\notag\\
&\geq -h (t)\int_X\frac{1}{(P_tf^p)(x)}\int_X\left(P_t\Big(f^p(\cdot+y)-f^p\Big)(x)\right)^2M(dy)\sigma(dx).
\end{align}
The H\"older inequality yields
\begin{align}\label{holder_Pt}
\left|P_t\Big(f^p(\cdot+y)-f^p\Big)(x)\right|\leq
\left(P_t\left(\frac{|f^p(\cdot+y)-f^p|^2}{f^p}\right)(x)\right)^{1/2}\pa{P_tf^p(x)}^{1/2},
\end{align}
for every $x,y \in X$, hence combining \eqref{Orpheus} with \eqref{holder_Pt} we get
\begin{align*}
F'(t)&\geq -h (t)\int_X\int_XP_t\left(\frac{|f^p(\cdot+y)-f^p|^2}{f^p}\right)(x) M(dy)\sigma(dx).
\end{align*}
By the Fubini theorem and the invariance of $\sigma$ with respect to $P_t$ we get
\begin{align*}
F'(t)&\geq -h (t)\int_X\int_X\frac{|f^p(x+y)-f^p(x)|^2}{f^p(x)} M(dy)\sigma(dx).
\end{align*}
Now integrating the previous inequality from $0$ to $t$ we get
\begin{align*}
F(t)-F(0)
& \ge -\|h \|_{L^1((0,\infty))}\int_X\int_X\frac{|f^p(x+y)-f^p(x)|^2}{f^p(x)} M(dy)\sigma(dx)
\end{align*}
or equivalently
\[
\int_X \! P_t f^p\log(P_t f^p)d\sigma - \! \int_X f^p\log f^p d\sigma
\geq -\|h \|_{L^1((0,\infty))} \! \int_X \! \int_X \! \frac{|f^p(x+y)-f^p(x)|^2}{f^p(x)} M(dy)\sigma(dx).
\]
Letting $t$ to infinity, and recalling \eqref{asympt} we get
\begin{align*}
\int_X f^p\log f^pd\sigma-\pa{\int_X f^pd\sigma}\log\pa{\int_X f^pd\sigma}
\leq C\int_X\int_X\frac{|f^p(x+y)-f^p(x)|^2}{f^p(x)} M(dy)\sigma(dx),
\end{align*}
where $C:= \|h \|_{L^1((0,\infty))}$, whence the claim.
\end{proof}

Now, let us denote by $\mathcal{H}^p$ the Banach space completion of $\mathcal{F}C^2_A(X)$
with respect to the norm
\begin{align*}
\norm{f}_{\mathcal{H}^p}:=\|f\|_{L^p(X,\sigma)}+\pa{\int_X\int_X|f(x+y)-f(x)|^2M(dy)\sigma(dx)}^{1/2}.
\end{align*}
Observe that, since $M$ is a L\'{e}vy measure, for every $f$ belonging to $\mathcal{F}C^2_A(X)$
$$
\|f\|_{\mathcal{H}^p}\le (1+2\sqrt{M(B^1_c)})\|f\|_\infty+
\|D f\|_\infty\left(\int_{B_1}|y|^2 M(dy)\right)^{1/2}<\infty.
$$
An immediate consequence of \eqref{log_sob} is the Poincar\'{e} inequality \eqref{est-poi}.
Similar estimates have already been proved in \cite[Corollary 1.4]{RW03}. But, we
derive them from the logarithmic Sobolev type inequality \eqref{log_sob}, while in \cite{RW03},
as these were not available when $M \not\equiv 0$, they are derived by using an idea due to Bakry
and Ledoux which consists in differentiating the map $s\mapsto P_{t-s}(P_s f)^2$
(see \cite{BL}) in order to get \eqref{est-poi}.

\begin{prop}\label{Poincare}
Under the hypotheses of Theorem \ref{thm_neve}, the estimate
\begin{equation}\label{est-poi}
\|f-m_\sigma(f)\|_{L^2(X, \sigma)}\le \sqrt{2C}\left(\int_X \int_X |f(x+y)-f(x)|^2M(dy)\sigma(dx)\right)^{1/2}
\end{equation}
holds true for any $f \in \mathcal{H}^2$. Here $C$ is the constant appearing in \eqref{log_sob}.
\end{prop}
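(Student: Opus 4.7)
The plan is to derive \eqref{est-poi} from the logarithmic Sobolev type inequality \eqref{log_sob} by the classical linearisation trick: apply \eqref{log_sob} with $p=1$ to $f=1+\varepsilon g$ for $g\in \mathcal{F}C^2_A(X)$, expand to second order in $\varepsilon$, and let $\varepsilon\to 0^+$. Since $g$ is bounded, $f$ has positive infimum for $\varepsilon$ small enough, and $f\in \mathcal{F}C^2_A(X)$ (constants belong to $\mathcal{F}C^2_A(X)$). Then I would extend the resulting inequality from $\mathcal{F}C^2_A(X)$ to the whole space $\mathcal{H}^2$ by density.

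More precisely, for the left-hand side I would compute, using the Taylor expansions $u\log u = (u-1) + (u-1)^2/2 + O((u-1)^3)$ around $u=1$,
\begin{align*}
\int_X (1+\varepsilon g)\log(1+\varepsilon g)\, d\sigma &= \varepsilon\, m_\sigma(g) + \tfrac{\varepsilon^2}{2}\, m_\sigma(g^2) + O(\varepsilon^3), \\
m_\sigma(1+\varepsilon g)\log m_\sigma(1+\varepsilon g) &= \varepsilon\, m_\sigma(g) + \tfrac{\varepsilon^2}{2}\, m_\sigma(g)^2 + O(\varepsilon^3),
\end{align*}
so that $\mathrm{Ent}_\sigma(1+\varepsilon g)=\tfrac{\varepsilon^2}{2}\|g-m_\sigma(g)\|_{L^2(X,\sigma)}^2+O(\varepsilon^3)$. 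For the right-hand side, the integrand becomes
\[
\frac{|f(x+y)-f(x)|^2}{f(x)} = \frac{\varepsilon^2\,|g(x+y)-g(x)|^2}{1+\varepsilon g(x)},
\]
so the RHS equals $C\varepsilon^2\int_X\!\int_X|g(x+y)-g(x)|^2 M(dy)\sigma(dx) + O(\varepsilon^3)$. Dividing by $\varepsilon^2$ and letting $\varepsilon\to 0^+$ yields \eqref{est-poi} for $g\in \mathcal{F}C^2_A(X)$, with the factor $2$ coming from the $\tfrac12$ in the entropy expansion.

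Finally, I would extend the inequality to all of $\mathcal{H}^2$ by the density of $\mathcal{F}C^2_A(X)$ in $\mathcal{H}^2$: both sides of \eqref{est-poi} are continuous with respect to the $\mathcal{H}^2$-norm, since $\|f-m_\sigma(f)\|_{L^2(X,\sigma)}\leq 2\|f\|_{L^2(X,\sigma)}$ and the double integral appearing on the right is exactly the square of the second summand in $\|\cdot\|_{\mathcal{H}^2}$.

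The routine part is the Taylor expansion; the only points requiring attention are first, to uniformly control the remainders (which is immediate since $g$ is bounded and the denominator $1+\varepsilon g$ is bounded away from zero, and since the double integral involving $|g(x+y)-g(x)|^2$ is finite because $g\in \mathcal{F}C^2_A(X)$ and $M$ is a L\'evy measure, as noted in the bound $\|g\|_{\mathcal{H}^2}<\infty$ preceding the statement); and second, to justify the passage to $\mathcal{H}^2$, which is built precisely so that the extension is automatic. No essentially new ideas beyond Theorem \ref{thm_neve} are needed.
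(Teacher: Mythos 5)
Your proposal is correct and follows essentially the same route as the paper: linearise the logarithmic Sobolev inequality \eqref{log_sob} around the constant function $1$ via $f_\varepsilon=1+\varepsilon g$, identify the variance from the second-order term of the entropy, control the remainders by dominated convergence using that $g$ is bounded and $M$ is a L\'evy measure, and extend to $\mathcal{H}^2$ by density. The only (immaterial) difference is that the paper applies \eqref{log_sob} with $p=2$ to $f_\varepsilon$ after normalising $m_\sigma(f)=0$, whereas you use $p=1$; both expansions produce the same constant $\sqrt{2C}$.
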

\begin{proof}
Consider first $f \in \mathcal{F}C^2_A(X)$ with $m_\sigma(f)=0$. For
$0<\varepsilon <(2\|f\|_\infty)^{-1}$, the function $f_\varepsilon:=1+\varepsilon f$ is greater or equal to
$1/2$. Thus, estimate \eqref{log_sob} with $p=2$ yields
\begin{equation*}\int_X f_\varepsilon^2\log(f_\varepsilon^2)d\sigma-m_\sigma(f_\varepsilon^2)
\log(m_\sigma(f_\varepsilon^2))\leq
C\int_X\int_X\frac{|f_\varepsilon^2(x+y)-f_\varepsilon^2(x)|^2}{f_\varepsilon^2(x)} M(dy)\sigma(dx).
\end{equation*}
Observing that
$$
\int_X f_\varepsilon^2\log(f_\varepsilon^2)d\sigma-m_\sigma(f_\varepsilon^2)\log(m_\sigma(f_\varepsilon^2))
= 2\varepsilon^2\|f\|_{L^2(X,\sigma)}^2+o(\varepsilon^2),\qquad\;\, \varepsilon \to 0^+ ,
$$
and that
\begin{align*}
\int_X&\int_X\frac{|f_\varepsilon^2(x+y)-f_\varepsilon^2(x)|^2}{f_\varepsilon^2(x)}M(dy)\sigma(dx)
\\
&=\int_X\int_X \frac{\Big(\varepsilon^2(f^2(x+y)-f^2(x))+2\varepsilon
(f(x+y)-f(x))\Big)^2}{(1+\varepsilon f(x))^2}M(dy)\sigma(dx)
\\
&= 4\varepsilon^2\int_X\int_X\frac{|f(x+y)-f(x)|^2}{(1+\varepsilon f(x))^2}M(dy)\sigma(dx)+
\int_X\int_X \frac{g_\varepsilon(x,y)}{(1+\varepsilon f(x))^2}M(dy)\sigma(dx),\qquad\;\,
\end{align*}
where
$$
g_\varepsilon(x,y)=\varepsilon^4(f^2(x+y)-f^2(x))^2+4\varepsilon^3(f^2(x+y)-f^2(x))(f(x+y)-f(x)),
$$
we get
\begin{align}\label{esami}
2\varepsilon^2\|f\|_{L^2(X,\sigma)}^2+o(\varepsilon^2)\le&
4\varepsilon^2C\int_X\int_X\frac{|f(x+y)-f(x)|^2}{(1+\varepsilon f(x))^2}M(dy)\sigma(dx)
\notag\\
&+C\int_X\int_X \frac{g_\varepsilon(x,y)}{(1+\varepsilon f(x))^2}M(dy)\sigma(dx).
\end{align}
Using the assumptions on $f$ and $\varepsilon$ we can estimate
$$
\frac{|f(x+y)-f(x)|^2}{f_\varepsilon(x)^2}\le \frac{1}{4}\Bigl(2\|f\|_\infty^2 \chi_{B^c_1}(y)+
\|Df\|_\infty^2|y|^2\chi_{B_1}(y)\Bigr),\qquad\;\, x,y \in X .
$$
and, analogously
$$
\frac{g_\varepsilon(x,y)}{f_\varepsilon(x)^2}\le \frac{1}{4}
\Bigl(C_1\chi_{B^c_1}(y)+C_2|y|^2\chi_{B_1}(y)\Bigr),
\qquad\;\, x,y \in X ,
$$
for some positive constants $C_1$ depending on $\|f\|_\infty$ and $C_2$ depending on $\|f\|_\infty$
and $\|Df\|_\infty$.
As $M$ is a L\'evy measure, letting $\varepsilon \to 0$ in \eqref{esami} by the dominated convergence
theorem we get
\begin{equation}\label{poi_est}
\|f\|_{L^2(X,\sigma)}^2\le 2C \int_X\int_X |f(x+y)-f(x)|^2 M(dy)\sigma(dx).
\end{equation}
For a general $f \in \mathcal{F}C^2_A(X)$, applying \eqref{poi_est} to $f -m_\sigma(f)$, we deduce
\begin{equation}\label{poi_est-1}
\|f-m_\sigma(f)\|_{L^2(X,\sigma)}^2\le 2C \int_X\int_X |f(x+y)-f(x)|^2 M(dy)\sigma(dx).
\end{equation}
To conclude, let us consider $f \in \mathcal{H}^2$ and let $(f_n)\subseteq \mathcal{F}C^2_A(X)$ converging
to $f$ in $\norm{\cdot}_{\mathcal{H}^2}$. Then writing \eqref{poi_est-1} with $f_n$ in place of $f$ and
letting $n \to \infty$ we get the claim. Indeed
$$
\|f_n-m_\sigma(f_n)\|_{L^2(X,\sigma)}^2= \|f_n\|_{L^2(X,\sigma)}^2-(m_\sigma(f_n))^2
$$
converges to $\|f\|_{L^2(X,\sigma)}^2-(m_\sigma(f))^2$ by the dominated convergence theorem.
Further, since
$$
\Big||f_n(x+y)-f_n(x)|-|f(x+y)-f(x)|\Big|\le |f_n(x+y)-f(x+y)-f_n(x)+f(x)|,
$$
for $M$-a.e. $y\in X$, $\sigma$-a.e. $x \in X$ and the right hand side of the previous inequality vanishes as
$n \to \infty$ for $M$-a.e. $y\in X$ and $\sigma$-a.e. $x \in X$, coming back to \eqref{poi_est-1} with
$f_n$ in place of $f$ and letting $n\to\infty$ we conclude the proof.
\end{proof}

For $p\in[1,\infty)$, we denote by
\[
\mathcal{W}^p=\left\{f:X\to \R\,\middle|\,\int_X\int_X ||f|^p(x+y)-|f|^p(x)|M(dy)\sigma(dx)<\infty\right\}.
\]
In the following proposition we use a bootstrap procedure similar the one in \cite{ALL13} in order to obtain estimates looking like \eqref{est-poi} for $p>2$.

\begin{prop}\label{prop-est-2}
Assume Hypotheses \ref{hyp_App} and \ref{base} hold true. For any $f \in \mathcal{W}^2$ it holds that
\begin{equation}\label{est-2}
\|f-m_\sigma(f)\|_{L^2(X, \sigma)}\le c\left(\int_X \int_X ||f|^2(\cdot+y)-|f|^2|M(dy)d\sigma\right)^{1/2}
\end{equation}
for some positive constant $c$. Then, for every $p \in (2,\infty)$, there exists a positive constant $c_p$
such that
\begin{equation}\label{est-p}
\|f\|_{L^p(X, \sigma)}^p\le c_p\int_X \int_X \Big||f|^p(x+y)-|f|^p\Big|\big[\chi_{B_1^c}(y)
+|y|^{2-p}\chi_{B_1}(y)\big]M(dy)\sigma(dx)
\end{equation}
for any $f \in \mathcal{W}^p$ with $m_\sigma(f)=0$.
\end{prop}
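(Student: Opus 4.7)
My plan is to prove the two estimates \eqref{est-2} and \eqref{est-p} in sequence, obtaining \eqref{est-2} from the Poincar\'e inequality \eqref{est-poi} and then bootstrapping to \eqref{est-p} by substituting $|f|^{p/2}$.

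For \eqref{est-2}, the idea is to apply \eqref{est-poi} from Proposition \ref{Poincare} with $|f|$ in place of $f$. The key elementary observation is that for any $a,b\ge 0$ one has $|a^2-b^2| = |a-b|(a+b) \ge (a-b)^2$, hence pointwise
\[
(|f|(x+y)-|f|(x))^2 \le \bigl||f|^2(x+y)-|f|^2(x)\bigr|.
\]
Plugging this into \eqref{est-poi} applied to $|f|$ yields
\[
\|\,|f|-m_\sigma(|f|)\|_{L^2(X,\sigma)}^2 \le 2C \int_X\int_X \bigl||f|^2(x+y)-|f|^2(x)\bigr|M(dy)\sigma(dx).
\]
To recover the same inequality with $\|f-m_\sigma(f)\|_{L^2}$ on the left-hand side, I would decompose $f=f^+-f^-$, apply the same reasoning separately to $f^\pm$, and reassemble via $f-m_\sigma(f) = (f^+-m_\sigma(f^+))-(f^--m_\sigma(f^-))$; the cross terms are controlled by exploiting the disjoint pointwise supports of $f^+$ and $f^-$, which also guarantee $(f^+)^2+(f^-)^2=|f|^2$, so that the increments of $(f^\pm)^2$ can be dominated by those of $|f|^2$ up to harmless sign adjustments. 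The passage from $\mathcal{F}C^2_A(X)$ to the whole space $\mathcal{W}^2$ is then done by approximation, as at the end of the proof of Proposition \ref{Poincare}.

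For \eqref{est-p} with $p>2$, I would bootstrap by applying \eqref{est-2} to $g:=|f|^{p/2}$, which lies in $\mathcal{W}^2$ whenever $f\in\mathcal{W}^p$ since $|g|^2=|f|^p$. This produces
\[
\|f\|_{L^p(X,\sigma)}^p - m_\sigma(|f|^{p/2})^2 \le c^2 \int_X\int_X \bigl||f|^p(x+y)-|f|^p(x)\bigr|M(dy)\sigma(dx).
\]
The mean-zero hypothesis $m_\sigma(f)=0$ is then used to absorb the nuisance term $m_\sigma(|f|^{p/2})^2$ into the right-hand side: splitting the $y$-integration into $\{|y|\le 1\}$ and $\{|y|>1\}$, on the small-jump region a second-order Taylor expansion of $|f|^p$ contributes an extra factor $|y|^2$ in the increment estimate, which, when balanced against the natural integrability $\int_{B_1}|y|^2 M(dy)<\infty$ coming from the L\'evy condition, forces the compensating factor $|y|^{2-p}$; on the large-jump region $|y|>1$ the unit weight already suffices. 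Putting the two regimes together produces the weight $\chi_{B_1^c}(y)+|y|^{2-p}\chi_{B_1}(y)$ appearing in \eqref{est-p}.

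The hardest step, in my view, will be the sign-changing portion of Stage 1: the naive bound $\|f-m_\sigma(f)\|_{L^2}\le \|\,|f|-m_\sigma(|f|)\|_{L^2}$ in fact fails, since $m_\sigma(f)^2\le m_\sigma(|f|)^2$ makes it go in the wrong direction. Hence one must genuinely analyse the positive and negative parts of $f$ separately, and the ``opposite sign'' case $f(x)f(x+y)<0$ must be handled carefully, as it is precisely there that $(f(x+y)-f(x))^2$ fails to be dominated by $|f^2(x+y)-f^2(x)|$ pointwise. Once \eqref{est-2} is available, Stage 2 is essentially a routine bootstrap, modulo the bookkeeping needed to track the weight through the small-$y$ Taylor expansion.
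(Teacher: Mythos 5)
There are two genuine gaps. First, you have misread the logical structure of the statement: \eqref{est-2} is a \emph{hypothesis} of the proposition, not a conclusion --- this is exactly what the remark following it clarifies when it observes that \eqref{est-2} is implied by \eqref{est-poi} and that therefore \eqref{est-p} holds under the hypotheses of Proposition \ref{Poincare}. The paper's proof only derives \eqref{est-p} from \eqref{est-2}. Your attempt to prove \eqref{est-2} for sign-changing $f$ cannot succeed: the decomposition $f=f^+-f^-$ fails because the increments of $(f^{\pm})^2$ are \emph{not} dominated by those of $|f|^2$; they can cancel. Concretely, for $f=\chi_A-\chi_{A^c}$ with $0<\sigma(A)<1$ one has $|f|^2\equiv 1$, so the right-hand side of \eqref{est-2} vanishes while $\|f-m_\sigma(f)\|_{L^2(X,\sigma)}^2=4\sigma(A)(1-\sigma(A))>0$: the estimate is simply false for general sign-changing $f\in\mathcal{W}^2$ with a universal constant. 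You correctly sensed that this is the dangerous spot, but the proposed repair (``harmless sign adjustments'' via disjoint supports) does not exist. Fortunately only the nonnegative functions $|f|^{p/2}$ are ever fed into \eqref{est-2}, and for $f\ge 0$ your derivation from \eqref{est-poi} via $(|f|(x+y)-|f|(x))^2\le \bigl||f|^2(x+y)-|f|^2(x)\bigr|$ is exactly right.

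Second, your bootstrap is vague precisely where the work lies, and the mechanism you propose for the weight $|y|^{2-p}\chi_{B_1}(y)$ is wrong. A second-order Taylor expansion of $|f|^p$ is not available ($f$ is merely in $\mathcal{W}^p$), and in any case $|y|^{2-p}\ge 1$ on $B_1$ is a blow-up factor, not a gain to be ``balanced'' against $\int_{B_1}|y|^2M(dy)<\infty$. In the paper the nuisance term $m_\sigma(|f|^{p/2})^2=\|f\|_{L^{p/2}(X,\sigma)}^p$ is handled, for $p\in(2,4]$, by $\|f\|_{L^{p/2}(X,\sigma)}\le\|f\|_{L^{2}(X,\sigma)}$, then \eqref{est-2} together with $m_\sigma(f)=0$, then the Jensen inequality in the outer variable; the resulting inner integral $\bigl(\int_X||f|^2(x+y)-|f|^2(x)|M(dy)\bigr)^{p/2}$ is split over $B_1$ and $B_1^c$ and estimated by the H\"older inequality with the weight $|y|^{2(p-2)/p}$ combined with the elementary inequality $||a|-|b||^{q}\le ||a|^{q}-|b|^{q}|$ (used in the form $||f|^2(x+y)-|f|^2(x)|^{p/2}\le ||f|^p(x+y)-|f|^p(x)|$); this is where $|y|^{2-p}$ actually comes from. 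You also omit that this direct argument only covers $p\in(2,4]$ and that the general case requires iterating \eqref{est-p} over the ranges $(4,8]$, $(8,16]$, and so on. As written, Stage 2 of your proposal does not amount to a proof.
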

\begin{proof}
Let $f\in \mathcal{W}^p$ with $m_\sigma(f)=0$. Since $p>2$, the function $f^{p/2}$ belongs to
$\mathcal{W}^2$. Then, applying estimate \eqref{est-2} to $f^{p/2}$ we deduce
\begin{align}\label{mia}
\|f\|_{L^p(X,\sigma)}^p -\|f\|_{L^{p/2}(X,\sigma)}^p&=
\|f^{p/2}-m_\sigma(f^{p/2})\|_{L^2(X, \sigma)}^2\notag
\\
&\le c \int_X \int_X ||f|^p(x+y)-|f|^p(x)|M(dy)\sigma(dx).
\end{align}
Now, if $p\in (2, 4]$, using that $ \|f\|_{L^{p/2}(X,\sigma)}\le  \|f\|_{L^2(X,\sigma)}$,
from \eqref{est-2} and \eqref{mia} we obtain
\begin{align}\label{int}
\|f\|_{L^p(X,\sigma)}^p &\le
\|f\|_{L^{2}(X,\sigma)}^p+c \int_X \int_X ||f|^p(x+y)-|f|^p(x)|M(dy)\sigma(dx)
\\
&\le c\left(\int_X \int_X ||f|^2(x+y)-|f|^2(x)|M(dy)\sigma(dx)\right)^{p/2}
\notag\\
&\quad+c \int_X \int_X ||f|^p(x+y)-|f|^p(x)|M(dy)\sigma(dx)
\notag\\
& \le  c\int_X \left(\int_X ||f|^2(x+y)-|f|^2(x)|M(dy)\right)^{p/2} \sigma(dx)
\notag\\ \notag
&\quad+c \int_X \int_X ||f|^p(x+y)-|f|^p(x)|M(dy)\sigma(dx)
\end{align}
where in the last line we used the Jensen inequality taking into account that $\sigma$ is a
Borel probability measure.
Furthermore, multiplying and dividing by $|y|^{2(p-2)/p}$ in $B_1$, using the H\"{o}lder inequality
and using that $M$ is a L\'{e}vy measure,  we can estimate
\begin{align*}
&\left(\int_X  ||f|^2(x+y)-|f|^2(x)|M(dy)\right)^{p/2}
\\
&\le 2^{\frac{p-2}{2}} \left(\int_{B_1} ||f|^2(x+y)-|f|^2(x)|M(dy)\right)^{p/2}
\notag\\
&\quad+ 2^{\frac{p-2}{2}} \left(\int_{B_1^c} ||f|^2(x+y)-|f|^2(x)|M(dy)\right)^{p/2}
\notag\\
& \le  2^{\frac{p-2}{2}} \left(\int_{B_1} |y|^2M(dy)\right)^{\frac{p-2}{2}}
\int_{B_1} \frac{||f|^2(x+y)-|f|^2(x)|^{p/2}}{|y|^{p-2}}M(dy)
\notag\\
&\quad+ (2M(B_1^c))^{\frac{p-2}{2}} \int_{B_1^c} ||f|^2(x+y)-|f|^2(x)|^{p/2}M(dy)
\notag\\
& \le  2^{\frac{p-2}{2}} \left(\int_{B_1} |y|^2M(dy)\right)^{\frac{p-2}{2}}
\int_{B_1} \frac{||f|^p(x+y)-|f|^p(x)|}{|y|^{p-2}}M(dy)
\notag\\
&\quad+(2M(B_1^c))^{\frac{p-2}{2}} \int_{B_1^c} ||f|^p(x+y)-|f|^p(x)|M(dy)
\end{align*}
where in the last inequality we used estimate $||a|-|b||^p\le ||a|^p-|b|^p|$ which holds true for
$a,b \in \R$, $p>1$. Indeed, assuming $|a|>|b|>0$ and setting $t=|a||b|^{-1}$, it suffices
to prove that $(1,\infty)\ni t\mapsto g(t):=(t-1)^p-t^p+1$ is nonpositive. But,
$g'(t)=p[(t-1)^{p-1}-t^{p-1}]\geq 0$ and then $g(t)\le g(1)=0$ for any $t \in (1,\infty)$.
Thus, summing up in \eqref{int} we get \eqref{est-p} for $p \in (2, 4]$.
Now, let $p \in (4, 8]$, then $p/2 \in (2, 4].$ Thus, starting from \eqref{mia} and using estimate \eqref{est-p} with $p/2$ in place of $p$ we deduce
\begin{align*}
\|f\|_{L^p(X,\sigma)}^p \le&
\|f\|_{L^{p/2}(X,\sigma)}^p + C \int_X\int_X ||f|^p(x+y)-|f|^p(x)|M(dy)\sigma(dx)
\\
\le& c_{p/2}^2\Bigg(\int_X\int_{B_1^c} ||f|^{p/2}(x+y)-|f|^{p/2}(x)|M(dy)\sigma(dx)
\\
&+\int_X\int_{B_1} \frac{||f|^{p/2}(x+y)-|f|^{p/2}(x)|}{|y|^{\frac{p}{2}-2}}M(dy)\sigma(dx)\Bigg)^2
\\
&+ C \int_X\int_X ||f|^p(x+y)-|f|^p(x)|M(dy)\sigma(dx)
\\
\le& 2 c_{p/2}^2\Bigg( \int_X\int_{B_1^c} ||f|^{p/2}(x+y)-|f|^{p/2}(x)|M(dy)\sigma(dx)\Bigg)^2
\\
&+ 2c_{p/2}^2\Bigg(\int_X\int_{B_1}
\frac{||f|^{p/2}(x+y)-|f|^{p/2}(x)|}{|y|^{\frac{p}{2}-2}}M(dy)\sigma(dx)\Bigg)^2
\\
&+ C \int_X\int_X ||f|^p(x+y)-|f|^p(x)|M(dy)\sigma(dx)
\\
\le& 2 c_{p/2}^2 \int_X\int_{B_1^c} ||f|^{p}(x+y)-|f|^{p}(x)|M(dy)\sigma(dx)
\\
&+  2c_{p/2}^2\Big(\int_{B_1}|y|^2M(dy)\Big)^{1/2}\int_X\int_{B_1}
\frac{||f|^{p}(x+y)-|f|^{p}(x)|}{|y|^{p-2}}M(dy)\sigma(dx)
\\
& + C \int_X\int_X ||f|^p(x+y)-|f|^p(x)|M(dy)\sigma(dx)
\end{align*}
getting again \eqref{est-p} for $p\in (4,8]$. Iterating this procedure we complete the proof.
\end{proof}

\begin{rmk}{\rm
Note that \eqref{est-2} is implied by \eqref{est-poi}. Therefore, under the hypotheses of Theorem
\ref{Poincare}, inequality \eqref{est-p} holds true.
}\end{rmk}

The arguments used in the proof of Proposition \ref{prop-est-2} can be used to deduce the
integrability of functions with polynomial growth with respect to $\sigma$ from their
integrability with respect to $M$. We discuss this in term of the moments of $M$ and $\sigma$.
If $\mu$ is a Borel measure on $X$, we denote by
\[
\mu_A(p):=\int_A |x|^p \mu(dx),\qquad\;\, A\in \mathcal{B}(X)
\]
the moment of order $p$ of $\mu$ on $A$. In the case $A=X$ we simply write $\mu(p)$.

\begin{prop}
Assume that the hypotheses of Theorem $\ref{thm_neve}$ are satisfied and that $\sigma(1),M(1)<\infty$. Then (i) if $M(2)<\infty$, then $\sigma(2)<\infty$ and (ii) if $\sigma(2)<\infty$ and $M(p)<\infty$ for some $p>2$, then $\sigma(p) <\infty$.
\end{prop}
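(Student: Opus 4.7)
The approach is to apply the Poincar\'{e}-type estimate \eqref{est-2} to bounded Lipschitz truncations of suitable powers of the norm, and then pass to the limit via monotone convergence, bootstrapping the moments of $\sigma$ one unit at a time.

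For (i), take $f_n(x):=|x|\wedge n$, which is bounded, $1$-Lipschitz, and therefore in $\mathcal{W}^2$. The pointwise bound
\[
|f_n^2(x+y)-f_n^2(x)|\le (f_n(x+y)+f_n(x))|y|\le (2|x|+|y|)|y|,
\]
after integration against $M\otimes\sigma$, produces the uniform-in-$n$ estimate $2\sigma(1)M(1)+M(2)<\infty$. Plugging this into \eqref{est-2} and using $m_\sigma(f_n)\le\sigma(1)$ yields a uniform bound on $\int_X f_n^2\,d\sigma$; since $f_n^2\uparrow|\cdot|^2$, monotone convergence gives $\sigma(2)<\infty$.

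For (ii), observe first that $M(p)<\infty$ together with the L\'{e}vy property $\int(1\wedge|y|^2)\,M(dy)<\infty$ yields $M(q)<\infty$ for every $q\in[2,p]$, because on $B_1$ one has $|y|^q\le|y|^2$ and on $B_1^c$ one has $|y|^q\le|y|^p$. We then prove by induction on $k\in\N$ that $\sigma(q)<\infty$ for every $q\in[1,2+k]\cap[1,p]$. The base $k=0$ follows from $\sigma(1),\sigma(2)<\infty$ and Lyapunov's inequality. For the inductive step, fix $q\in(2+k,3+k]\cap(2,p]$ and put $\phi_n(x):=(|x|\wedge n)^{q/2}$; this function is bounded and Lipschitz, hence in $\mathcal{W}^2$. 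The mean value inequality applied to $t\mapsto t^q$ on $[0,\infty)$, combined with the $1$-Lipschitz continuity of $t\mapsto t\wedge n$, gives
\[
|\phi_n^2(x+y)-\phi_n^2(x)|\le q(|x|+|y|)^{q-1}|y|\le q\,2^{q-2}\bigl(|x|^{q-1}+|y|^{q-1}\bigr)|y|,
\]
whence
\[
\int_X\int_X |\phi_n^2(x+y)-\phi_n^2(x)|\,M(dy)\sigma(dx)\le q\,2^{q-2}\bigl(\sigma(q-1)M(1)+M(q)\bigr),
\]
which is finite and uniform in $n$ since $q-1\le 2+k$ makes $\sigma(q-1)<\infty$ by the inductive hypothesis. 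Applying \eqref{est-2} to $\phi_n$ and bounding $m_\sigma(\phi_n)\le\sigma(q/2)\le 1+\sigma(q-1)$ (from $|x|^{q/2}\le 1+|x|^{q-1}$, valid for $q\ge 2$), we obtain a uniform-in-$n$ bound on $\int_X(|x|\wedge n)^q\,d\sigma$; monotone convergence then gives $\sigma(q)<\infty$. Since $p$ is reached after finitely many iterations, $\sigma(p)<\infty$.

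The main potential obstacle is the bookkeeping of the moment conditions through the induction; the analytic content reduces to the elementary inequalities $|a^q-b^q|\le q\max(a,b)^{q-1}|a-b|$ and $(a+b)^r\le 2^{r-1}(a^r+b^r)$ for $a,b\ge 0$, $r\ge 1$, together with the tail comparison between $M$ and $\sigma$.
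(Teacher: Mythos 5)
Your proof is correct, and it rests on the same two ingredients as the paper's own argument --- the Poincar\'e-type estimate \eqref{est-2} and the elementary pointwise bound $\big||x+y|^q-|x|^q\big|\le q\,|y|\,(|x|+|y|)^{q-1}$ (the paper's \eqref{ricor}) --- but the bootstrap is organised differently. The paper applies \eqref{mia} and \eqref{int} directly to the unbounded function $f(x)=|x|$, arrives at \eqref{p24} with the term $\sigma(p-1)$ on the right-hand side, and removes it by Young's inequality, absorbing a small multiple of $\sigma(p)$ into the left-hand side; it also reuses the $L^p$ machinery of Proposition \ref{prop-est-2} (the doubling $p\mapsto p/2$). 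You instead raise the moment by one unit at a time, applying only the $L^2$ inequality \eqref{est-2} to the bounded truncations $(|x|\wedge n)^{q/2}$ and passing to the limit by monotone convergence. Your route is slightly longer but more careful precisely where the paper is cavalier: the absorption step in \eqref{p24} (``choosing $\varepsilon$ small enough'') is only legitimate once one already knows $\sigma(p)<\infty$, and \eqref{int} is formally stated for mean-zero $f$; neither issue arises for your truncated, bounded functions, for which all quantities are finite throughout. One small remark: the assertion ``bounded and Lipschitz, hence in $\mathcal{W}^2$'' silently uses the standing hypothesis $M(1)<\infty$, since the increment of $f^2$ is only $O(|y|)$ near $y=0$ and a general L\'evy measure integrates $|y|^2$, not $|y|$, near the origin; as your displayed estimates verify the required finiteness explicitly, this is harmless, but it is worth flagging that membership in $\mathcal{W}^2$ is not automatic for bounded Lipschitz functions without that hypothesis.
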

\begin{proof}
First of all, observe that, since $M$ is a L\'evy measure, the assumption $M(p)<\infty$
is equivalent to $M_{B^c_1}(p)<\infty$. Let first be $p=2$. Setting $f(x)=|x|$, observe that the function
$f^p$ is convex and differentiable for any $p\ge2$ and for every $x,y \in X$ satisfies
$$
|f^p(x+y)-f^p(x)|\le \max\{|\langle D f^p(x+y),y\rangle|, |\langle D f^p(x),y\rangle|\}
\le |y|\bigl(|D f^p(x+y)|+ |D f^p(x)|\bigr).
$$
We deduce, for $p\geq 2$,
\begin{align}\label{ricor}
\int_X\int_X ||x+y|^p-|x|^p|M(dy)\sigma(dx)
&\le p \int_X\int_X|y|(|x+y|^{p-1}+|x|^{p-1})M(dy)\sigma(dx)
\notag\\
&\le p(1+2^{p-2})M(1)\sigma(p-1)+2^{p-2}pM(p).
\end{align}
Estimates \eqref{mia} and \eqref{ricor} prove assertion (i).

Now, take $p>2$. Applying \eqref{int} to $f(x)=|x|$ and using \eqref{ricor} we have
\begin{align}\label{p24}
\sigma(p)\le &\sigma(2)^{p/2}+cp(1+2^{p-2})M(1)\sigma(p-1)+cp2^{p-2}M(p).
\end{align}
The Young inequality yields
\[
\sigma(p-1)\le \varepsilon \frac{p-1}{p}\sigma(p)+\frac{1}{p\varepsilon^{p-1}}
\]
for any $\varepsilon>0$. Hence, from \eqref{p24} we get
\[
\sigma(p)\le \sigma(2)^{p/2}+c(1+2^{p-2})\varepsilon(p-1) M(1) \sigma(p)+
\frac{c(1+2^{p-2})}{\varepsilon^{p-1}}M(1)+2^{p-2}cpM(p),
\]
and choosing $\varepsilon>0$ small enough we conclude that
\[
\sigma(p)\le  C_1(\sigma(2))^{p/2}+ C_2M(p) + C_3M(1),
\]
for some positive constants $C_1$, $C_2$ and $C_3$.
\end{proof}

Now we introduce an appropriate class of functions that satisfies \eqref{log_sob} in a ``nicer'' way.
For every $c>0$ consider the space
\begin{align*}
[\mathcal{F}C^2_A(X)]_c:=\{f\in\mathcal{F}C^2_A(X)\,|\,|f|\geq c\}.
\end{align*}
Let $\mathcal{H}^2_c$ be the closure of $[\mathcal{F}C^2_A(X)]_c$ in $\mathcal{H}^2$ and let us set
\[
\mathcal{H}^2_0:=\bigcup_{c>0}\mathcal{H}^2_c.
\]

\begin{prop}\label{ciao}
Under the hypotheses of Theorem \ref{thm_neve}, for any $f\in\mathcal{H}^2_0$ there exists a positive constant $c_f$, depending on $f$, such that
\begin{align}\label{log_sob_est}
{\rm Ent}_\sigma(|f|)\leq Cc_f\int_X\int_X|f(x+y)-f(x)|^2 M(dy)\sigma(dx).
\end{align}
where $C$ is defined in Theorem \ref{thm_neve}.
\end{prop}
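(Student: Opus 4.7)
The plan is to first establish the estimate on the dense subspace $[\mathcal{F}C^2_A(X)]_c$ with the explicit constant $c_f=1/c$, and then pass to the limit using the definition of $\mathcal{H}^2_c$.

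\textbf{Step 1 (base case).} Fix $c>0$ and $f\in[\mathcal{F}C^2_A(X)]_c$. Since $f$ is continuous on the connected space $X$ and $|f|\ge c>0$, $f$ has constant sign, so $|f|=\pm f\in\mathcal{F}C^2_A(X)$ and $\inf |f|\ge c$. I would then apply Theorem \ref{thm_neve} with $p=1$ to $|f|$, obtaining
\begin{equation*}
{\rm Ent}_\sigma(|f|)\le C\int_X\int_X\frac{\bigl||f|(x+y)-|f|(x)\bigr|^2}{|f|(x)}M(dy)\sigma(dx).
\end{equation*}
The reverse triangle inequality $\bigl||f|(x+y)-|f|(x)\bigr|\le|f(x+y)-f(x)|$ together with the lower bound $|f(x)|^{-1}\le c^{-1}$ then gives \eqref{log_sob_est} with $c_f=1/c$.

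\textbf{Step 2 (approximation).} Given $f\in\mathcal{H}^2_0$, pick $c>0$ with $f\in\mathcal{H}^2_c$ and a sequence $(f_n)\subseteq[\mathcal{F}C^2_A(X)]_c$ with $f_n\to f$ in the $\mathcal{H}^2$-norm. By Step 1, each $f_n$ satisfies
\begin{equation*}
{\rm Ent}_\sigma(|f_n|)\le \frac{C}{c}\int_X\int_X|f_n(x+y)-f_n(x)|^2 M(dy)\sigma(dx),
\end{equation*}
so it suffices to pass to the limit on both sides. The right-hand side converges by the very definition of the $\mathcal{H}^2$-norm. For the left-hand side I would extract an a.e.\ convergent subsequence dominated by some $g\in L^2(X,\sigma)$; since $|f_n|\ge c$ $\sigma$-a.e.\ the limit also satisfies $|f|\ge c$ $\sigma$-a.e., and the elementary bounds $-1/e\le t\log t\le t^2$ for $t\ge 0$ yield the domination
\begin{equation*}
\bigl||f_n|\log|f_n|\bigr|\le \tfrac{1}{e}+g^2\in L^1(X,\sigma).
\end{equation*}
Dominated convergence then gives $\int|f_n|\log|f_n|\,d\sigma\to\int|f|\log|f|\,d\sigma$, and $m_\sigma(|f_n|)\to m_\sigma(|f|)$ because $|f_n|\to|f|$ in $L^1(X,\sigma)$ (which is implied by $L^2$ convergence, since $\sigma$ is finite). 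Hence ${\rm Ent}_\sigma(|f_n|)\to{\rm Ent}_\sigma(|f|)$, and \eqref{log_sob_est} follows with $c_f=1/c$.

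\textbf{Main obstacle.} The one place that requires attention is the dominated-convergence argument for the entropy: the $\mathcal{H}^2$-norm does not control $\|f_n\|_\infty$, so a uniform pointwise bound is unavailable and one genuinely needs the double bound $-1/e\le t\log t\le t^2$ together with an $L^2$-dominating function extracted along a subsequence. Once this is handled, the rest is an easy limiting argument, and the constant $c_f$ depends on $f$ only through the membership $f\in\mathcal{H}^2_c$.
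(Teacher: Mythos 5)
Your proof is correct and follows the same two-step scheme as the paper's: first establish the estimate with $c_f=1/c$ on the dense class $[\mathcal{F}C^2_A(X)]_c$, then pass to the completion $\mathcal{H}^2_c$. The only differences are local and both variants are valid: in Step 1 you exploit connectedness of $X$ to write $|f|=\pm f\in\mathcal{F}C^2_A(X)$ and apply Theorem \ref{thm_neve} directly, where the paper instead regularises via $f_n=(f^2+1/n)^{1/2}$ and passes to the limit; in Step 2 you use dominated convergence along a subsequence (via the bounds $-1/e\le t\log t\le t^2$ and an $L^2$ dominating function), where the paper combines Fatou's lemma for the term $\int_X|f|\log|f|\,d\sigma$ with the convergence of the remaining terms guaranteed by the $\mathcal{H}^2$-norm.
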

\begin{proof}
First note that from \eqref{log_sob} we immediately deduce estimate \eqref{log_sob_est} for every
$f$ in $\mathcal{F}C^2_A(X)$ with positive infimum. In this case estimate \eqref{log_sob_est}
holds true with $c_f^{-1}=\inf f$. To deduce \eqref{log_sob_est} for a general $f$ in
$[\mathcal{F}C^2_A(X)]_c$, consider the sequence $(f_n)$ defined by $f_n:=(f^2+(1/n))^{1/2}$ satisfying
$f_n\ge c$ for every $n\in\N$. Writing \eqref{log_sob_est} with $f_n$ in place of $f$ we get
$$
\int_X f_n\log (f_n)d\sigma-m_\sigma(f_n)\log (m_\sigma(f_n))\leq
Cc^{-1}\int_X\int_X|f_n(x+y)-f_n(x)|^2 M(dy)\sigma(dx).
$$
Observing that $f_n$ converges pointwise to $|f|$ as $n \to \infty$, by the dominated convergence theorem we
deduce estimate \eqref{log_sob_est} for functions in $[\mathcal{F}C^2_A(X)]_c$.

Now let $f\in \mathcal{H}^2_0$. Then, there exists $c>0$ such that $f\in \mathcal{H}^2_c$ and a sequence
$(f_n)_{n\in\N}\subseteq [\mathcal{F}C^2_A(X)]_c$ converging to $f$ in $\mathcal{H}^2$, as $n\to\infty$. By the previous step
\begin{align*}
\int_X |f_n|\log |f_n|d\sigma-m_\sigma(|f_n|)\log(m_\sigma(|f_n|))\leq
Cc^{-1}\int_X\int_X|f_n(x+y)-f_n(x)|^2 M(dy)\sigma(dx).
\end{align*}
Up to a subsequence we may assume that $(f_n)_{n\in\N}$ converges pointwise $\sigma$-a.e. to $f$.
So by the Fatou lemma we have
\begin{align*}
\int_X|f|\log|f|d\sigma\leq \liminf_{n\ra\infty}\int_X |f_n|\log |f_n|d\sigma
\end{align*}
and using the convergence in $\mathcal{H}^2$ we obtain
\begin{align*}
\int_X|f|\log|f|d\sigma &\leq \liminf_{n\ra\infty}\int_X |f_n|\log |f_n|d\sigma
\\
&\leq\liminf_{n\ra\infty}\sq{\pa{\int_X |f_n|d\sigma}\log\pa{\int_X |f_n|d\sigma}}
\\
&\quad+Cc^{-1}\liminf_{n\ra\infty}\sq{\int_X\int_X|f_n(x+y)-f_n(x)|^2 M(dy)\sigma(dx)}
\\
&= m_\sigma(|f|)\log(m_\sigma(|f|))+Cc^{-1}\int_X\int_X|f(x+y)-f(x)|^2 M(dy)\sigma(dx)
\end{align*}
and we conclude.
\end{proof}

\begin{rmk}\label{gaussiana}{\rm
Until now, we have assumed $Q=0$ in the representation $\lambda\leftrightarrow [b,Q,M]$ of the
function defined in \eqref{lambda}. If $Q\neq 0$ is a nonnegative self-adjoint trace-class operator, then
we define the operators $Q_t = \int_0^t T_sQT_s^*ds$, that are nonnegative, self-adoint and trace-class
as well. The measures $\mu_t$ are associated with the triple $[b_t,Q_t,M_t]$ with $b_t,M_t$ given by
\eqref{scrivere meglio}, \eqref{defM_t}. Assuming that $\sup_t {\rm Tr}\,Q_t <\infty$, the operator
$Q_\infty$ is well defined and under the assumptions of Theorem \ref{furhman} there is a unique
invariant measure associated with $P_t$ given by the convolution between the Gaussian measure
$\gamma:=\mathcal{N}(0,Q_\infty)\leftrightarrow[0,Q_\infty,0]$ and the probability
measure $\sigma\leftrightarrow[b_\infty,0,M_\infty]$. In such case, assume further
the estimate
\begin{equation}\label{gra-est}
|Q_\infty^{1/2} DP_tf|\le \psi(t) P_t (| Q_\infty^{1/2}Df|),
\end{equation}
for any $t \ge 0$, $f \in \mathcal{F}C^2_A(X)$ and some nonnegative $\psi \in L^1((0,\infty))$
(see, for instance, the proof of \cite[Proposition 11.2.17]{DaP02} for the Ornstein-Uhlenbeck semigroup and \cite[Lemma 2.1]{RW03} for more general semigroups in the infinite dimensional case and \cite[Chapter 6]{Lor} for the same estimates in finite dimension).
Then, by the classical logarithmic Sobolev and Poincar\'e inequalities for $\gamma$
and the product property of the entropy and the variance with respect to
convolution of measures (see \cite[Proposition 2.2]{Led}), estimates \eqref{log_sob} and
\eqref{poi_est} can be reformulated as
\begin{equation}\label{bing}
{\rm Ent}_{\gamma * \sigma}(f^p)\le c\int_{X} f^{p-2}| Q_\infty^{1/2} Df|^2 d\gamma
+ C\int_X\int_X \frac{|f^p(x+y)-f^p(x)|^2}{f^p(x)} M(dy)\sigma(dx)
\end{equation}
for any $p \in [1,\infty)$, $f\in \mathcal{F}C^2_A(X)$ with positive infimum and some positive $c$
depending on $\|\psi\|_{L^1((0,\infty))}$, and
\begin{equation*}
\|f-m_\sigma(f)\|_{L^2(X,\gamma*\sigma)}\le c'\|Q^{1/2}Df\|_{L^2(X,\gamma)}+
\sqrt{2C}\left(\int_X \int_X |f(x+y)-f(x)|^2M(dy)\sigma(dx)\right)^{1/2}
\end{equation*}
for any $f\in \mathcal{H}^2$ and some positive $c'$.
In particular, estimate \eqref{log_sob_est} becomes
\begin{equation*}
{\rm Ent}_{\gamma * \sigma}(|f|)\le c\int_{X} |f|^{-1}| Q_\infty^{1/2} D|f||^2 d\gamma+
Cc_f\int_X\int_X |f(x+y)-f(x)|^2 M(dy)\sigma(dx)
\end{equation*}
for any $f\in \mathcal{H}^2_0$ and some positive constant $c'$. Here $C$ and $c_f$ are the constants appearing in Theorem \ref{thm_neve}.}
\end{rmk}

\section{Exponential integrability of Lipschitz functions}\label{sec_exp}

In this section we provide sufficient conditions for exponentially growing functions to be
integrable with respect to the invariant measure $\sigma$. This type of results are known for
various type of measures: for example the classical Fernique theorem (see e.g. \cite{Led}) says that
functions with exponential growth are integrable with respect to Gaussian measures in infinite
dimension, and similar results hold for discrete Bernoulli, or Poisson measures
(see \cite{BL98,Wu00}).

To get our results, we need to require further properties on the L\'{e}vy measure $M$.
\begin{hyp0}\label{hyp_exp}
For any $s \in (0,\infty)$ it holds that
\(M_s:=\int_X|y|^2 e^{s|y|}M(dy)\) is finite,
and there exist $C_0>0$, $\gamma\ge 1$ and $s_0>0$ such that for any $s\geq s_0$
\begin{equation}\label{psi}
\psi(s):=\int_{B_1^c}|y|e^{s|y|}M(dy)\ge C_0 e^{\gamma s}.
\end{equation}
\end{hyp0}

Note that the function $\psi:(0,\infty)\to (0,\infty)$ defined in \eqref{psi} is a continuous
nondecreasing function, hence its inverse is well-defined and it is continuous and nondecreasing, too. Moreover, we stress that, by \eqref{LevyM}, the
finiteness of $M_s$ in Hypothesis \ref{hyp_exp} is equivalent to the finiteness of the same
integral on $B_1^c$.

\begin{lemm}\label{pre-lem}
Under the hypotheses of Theorem \ref{thm_neve}, for any $f\in {\rm Lip}_b(X)$, with Lipschitz constant less than or equal to
$\tau$, it holds that
\begin{equation}\label{verme}
{\rm Ent}_\sigma (e^f)\le C\tau^2 M_{2\tau} m_\sigma(e^f),
\end{equation}
where $C$ is the constant appearing in \eqref{log_sob} and $M_{2\tau}$ is defined in Hypothesis \ref{hyp_exp}.
\end{lemm}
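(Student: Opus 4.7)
The natural move is to apply the logarithmic Sobolev inequality \eqref{log_sob} with $p=1$ to the function $e^f$: since $f\in\mathrm{Lip}_b(X)$, the function $e^f$ is bounded with positive infimum $e^{-\|f\|_\infty}$, so the right hand side of \eqref{log_sob} makes sense. Granting the application momentarily, we would obtain
\[
{\rm Ent}_\sigma(e^f)\le C\int_X\int_X\frac{|e^{f(x+y)}-e^{f(x)}|^2}{e^{f(x)}}\,M(dy)\sigma(dx).
\]
The elementary inequality $|e^a-e^b|\le|a-b|\,e^{a\vee b}$ (from $|e^t-1|\le |t|e^{|t|}$) gives $|e^a-e^b|^2\le(a-b)^2 e^b e^{b}e^{2|a-b|}$, hence, writing $a=f(x+y)$, $b=f(x)$ and using $|f(x+y)-f(x)|\le \tau |y|$,
\[
\frac{|e^{f(x+y)}-e^{f(x)}|^2}{e^{f(x)}}\le \tau^2|y|^2 e^{2\tau|y|}\,e^{f(x)}.
\]
Integrating in $y$ with respect to $M$, using Fubini, the definition $M_{2\tau}=\int_X|y|^2 e^{2\tau|y|}M(dy)$ (finite by Hypothesis \ref{hyp_exp}), and integrating in $x$ with respect to $\sigma$ yields exactly the desired bound $C\tau^2 M_{2\tau}\,m_\sigma(e^f)$.

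\textbf{Approximation step.} The point requiring care is that $e^f$ is generally not in $\mathcal{F}C^2_A(X)$, which is the class for which \eqref{log_sob} has been proved. We choose a sequence $(f_n)\subseteq \mathcal{F}C^2_A(X)$ approximating $f$ in such a way that $\|f_n\|_\infty\le \|f\|_\infty+1$ and the Lipschitz constant of $f_n$ is bounded by $\tau$ (up to vanishing perturbations), with $f_n\to f$ pointwise $\sigma$-a.e.; this is standard via cylindrical smoothings built from the basis $\{h_n\}$ of Hypothesis \ref{hyp_App}, combined with a $C^2$ regularisation in $\R^n$. Then $e^{f_n}\in \mathcal{F}C^2_A(X)$ has uniform positive infimum, so \eqref{log_sob} with $p=1$ gives
\[
{\rm Ent}_\sigma(e^{f_n})\le C\int_X\int_X\frac{|e^{f_n(x+y)}-e^{f_n(x)}|^2}{e^{f_n(x)}}\,M(dy)\sigma(dx)\le C\tau^2 M_{2\tau}\,m_\sigma(e^{f_n}).
\]
Since the integrands on both sides are dominated uniformly in $n$ by $\sigma$- resp.\ $M\otimes\sigma$-integrable functions (use $\|e^{f_n}\log e^{f_n}\|_\infty\le e^{\|f\|_\infty+1}(\|f\|_\infty+1)$ for the entropy and the bound above with $\tau$ replaced by its uniform upper bound for the double integral), the dominated convergence theorem transfers the inequality to $f$.

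\textbf{Main obstacle.} The only real subtlety is this approximation argument: we must produce $\mathcal{F}C^2_A(X)$-approximants of the bounded Lipschitz function $f$ that preserve both the $L^\infty$-bound and (essentially) the Lipschitz constant, in order to keep the RHS controlled by the same $\tau^2 M_{2\tau}$. Everything else reduces to the pointwise exponential inequality and Fubini, both of which are routine once the approximation is in place.
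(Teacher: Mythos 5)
Your proposal is correct and follows essentially the same route as the paper: apply \eqref{log_sob} with $p=1$ to $e^f$, bound the increment of the exponential pointwise by $\tau^2|y|^2e^{2\tau|y|}e^{f(x)}$ (the paper uses the mean value theorem where you use the elementary inequality $|e^a-e^b|\le|a-b|e^{a\vee b}$, which is the same estimate), and reduce to $f\in\mathcal{F}C^2_A(X)$ by cylindrical smoothing preserving the sup norm and the Lipschitz constant -- exactly what the paper's Proposition \ref{Task} supplies -- followed by dominated convergence.
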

\begin{proof}
By Proposition \ref{Task} it suffices to prove the claim for
$f \in \mathcal{F}C^2_A(X)$ and use the dominated convergence theorem to complete the proof.

For $f\in\mathcal{F}C^2_A(X)$, the function $e^f$ belongs to $\mathcal{F}C^2_A(X)$ and has positive infimum.
Moreover, the mean value theorem together with the fact that $\|D f\|_\infty\le \tau$ yield
\begin{equation*}
|e^{f(x+y)}-e^{f(x)}|= e^{\theta}|f(x+y)-f(x)|\le \tau e^{\theta}|y|
\end{equation*}
for any $x,y \in X$ and some $\theta \in (f(x+y)\wedge f(x),f(x+y)\vee f(x))$. We can then
apply \eqref{log_sob} with $p=1$ to get
\begin{align*}
{\rm Ent}_\sigma (e^f) &\le C\int_X\int_X \frac{|e^{f(x+y)}-e^{f(x)}|^2}{e^{f(x)}}M(dy)\sigma(dx)
\\
& \le C\tau^2\int_X\int_X |y|^2\frac{e^{2\theta}}{e^{f(x)}}M(dy)\sigma(dx)
\\
& \le C\tau^2\int_X\int_X |y|^2e^{2(\theta-f(x))}e^{f(x)}M(dy)\sigma(dx)
\\
& \le C\tau^2\int_X\int_X |y|^2e^{2|f(x+y)-f(x)|}e^{f(x)}M(dy)\sigma(dx)
\\
& \le C\tau^2\int_X\int_X |y|^2e^{2\tau|y|}e^{f(x)}M(dy)\sigma(dx)
\\
&= C\tau^2 M_{2\tau}\int_Xe^{f(x)}\sigma(dx).\qedhere
\end{align*}
\end{proof}

The next result is a Fernique type theorem for the measure $\sigma$. The key tool is an
estimate of the tail of the distribution of a Lipschitz continuous function with respect to
$\sigma$ in terms of the function $\psi$ introduced in \eqref{psi} (cf. \cite{BL} for the Poisson case).

\begin{thm}\label{promosso}
Assume that the hypotheses of Theorem \ref{thm_neve} and Hypothesis \ref{hyp_exp} hold true. Any
Lipschitz continuous function $g:X \to \R$, with Lipschitz constant less than or equal to $1$, belongs to $L^1(X, \sigma)$ and
there exist positive constants $c_0,c_1,c_2$ and $t_0$ such that
\begin{equation}\label{G_tail}
\sigma \left(\{g\ge m_\sigma(g)+t\}\right)\le \left\{\begin{array}{ll}
\exp(-c_0t^2),\qquad\;\, &t \in (0, t_0);
\\
\exp(-c_1t \psi^{-1}(c_2t)),\qquad\;\, &t \in [t_0,\infty).
\end{array}
\right.
\end{equation}
Moreover, for sufficiently small $c>0$,
\begin{equation}\label{torta}
\int_X e^{c g\psi^{-1}(|g|)}d\sigma<\infty.
\end{equation}
\end{thm}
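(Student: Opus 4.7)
The strategy is Herbst's differential-inequality method applied to the entropy bound of Lemma \ref{pre-lem}, followed by a Chebyshev estimate that I optimize in two regimes. First I reduce to the case of a bounded $1$-Lipschitz function by truncating $g_n:=((-n)\vee g)\wedge n$, which is again $1$-Lipschitz and bounded; tail bounds uniform in $n$ will then yield $L^1$-integrability of $g$, \eqref{G_tail} and \eqref{torta} in the general case by Fatou and monotone convergence.

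For a bounded $1$-Lipschitz $g$ and any $\lambda>0$, applying Lemma \ref{pre-lem} to $\lambda g$ (which is $\lambda$-Lipschitz) gives
\[
{\rm Ent}_\sigma(e^{\lambda g})\le C\lambda^2 M_{2\lambda}\, m_\sigma(e^{\lambda g}).
\]
Writing $H(\lambda):=m_\sigma(e^{\lambda g})$ and using ${\rm Ent}_\sigma(e^{\lambda g})=\lambda H'(\lambda)-H(\lambda)\log H(\lambda)$, division by $\lambda^2 H(\lambda)$ recasts the estimate as
\[
\frac{d}{d\lambda}\bigg(\frac{\log H(\lambda)}{\lambda}\bigg)\le C\, M_{2\lambda}.
\]
Since $\log H(\lambda)/\lambda\to m_\sigma(g)$ as $\lambda\to 0^+$ by l'H\^opital, integration together with Fubini yields
\[
\log H(\lambda)\le \lambda\, m_\sigma(g)+C\lambda\, G(\lambda),\qquad G(\lambda):=\tfrac{1}{2}\int_X|y|(e^{2\lambda|y|}-1)M(dy).
\]
Chebyshev then produces, for all $t,\lambda>0$,
\[
\sigma(\{g-m_\sigma(g)\ge t\})\le \exp\big(-\lambda t+C\lambda G(\lambda)\big).
\]

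I now extract \eqref{G_tail} by optimizing in $\lambda$. For small $t$ I choose $\lambda$ small: using $e^{2\lambda|y|}-1\le 2\lambda|y|e^{2\lambda|y|}$ and the finiteness of $M_s$ near $s=0$ (a consequence of Hypothesis \ref{hyp_exp}), one has $G(\lambda)\le K\lambda$ for $\lambda$ in a neighbourhood of $0$, so the exponent $-\lambda t+CK\lambda^2$ is minimized near $\lambda\sim t$, producing the Gaussian bound $e^{-c_0 t^2}$. For large $t$ I compare $G$ with $\psi$ from Hypothesis \ref{hyp_exp}: on $B_1^c$, $|y|(e^{2\lambda|y|}-1)\ge \tfrac12 |y|e^{2\lambda|y|}$ for $\lambda$ large, hence $G(\lambda)\ge \tfrac14\psi(2\lambda)$; the $B_1$-contribution to $G$ is at most $\lambda e^{2\lambda}\int_{B_1}|y|^2M(dy)$, which by \eqref{psi} and $\gamma\ge 1$ (so $e^{2\lambda}\le \psi(2\lambda)/C_0$) is bounded by a constant multiple of $\lambda\psi(2\lambda)$. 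Choosing $\lambda^*$ so that $CG(\lambda^*)=t/2$, equivalently $\psi(2\lambda^*)\asymp t$ up to multiplicative constants, makes the exponent at most $-\lambda^* t/2\le -c_1 t\,\psi^{-1}(c_2 t)$ for suitable positive $c_1,c_2$.

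Applying the same argument to $-g$ yields the matching lower-tail bound, so \eqref{G_tail} holds for $|g-m_\sigma(g)|$; since $\sigma$ is a probability measure this also gives $g\in L^1(X,\sigma)$ after undoing the truncation. For \eqref{torta}, the layer-cake formula yields
\[
\int_X e^{cg\psi^{-1}(|g|)}d\sigma\le 1+\int_0^\infty c\big(\psi^{-1}(t)+t(\psi^{-1})'(t)\big)e^{ct\psi^{-1}(t)}\,\sigma(\{|g|>t\})\,dt,
\]
and inserting the tail bound of \eqref{G_tail}, the at-most logarithmic growth of $\psi^{-1}$ forced by $\psi(s)\ge C_0 e^{\gamma s}$ keeps $\psi^{-1}(c_2 t)/\psi^{-1}(t)$ bounded away from $0$, so that for $c$ sufficiently small the integrand decays like $e^{-c' t\psi^{-1}(t)}$ and the integral is finite. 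The main technical obstacle is the two-sided comparison $G(\lambda)\asymp \psi(2\lambda)$ in the large-$\lambda$ regime, where the extra factor of $\lambda$ from the $B_1$-contribution must be absorbed into $\psi(2\lambda)$ via $\gamma\ge 1$, and the subsequent matching of constants with $\psi^{-1}$ in the final integrability step.
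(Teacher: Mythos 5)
Your proposal is correct and follows essentially the same route as the paper: the Herbst differential inequality derived from Lemma \ref{pre-lem}, integration of $\frac{d}{d\lambda}(\lambda^{-1}\log m_\sigma(e^{\lambda g}))$, a Chebyshev bound optimised separately in the small-$t$ (Gaussian) and large-$t$ ($\lambda\approx\tfrac12\psi^{-1}(\alpha t)$) regimes, truncation to pass from bounded to general $1$-Lipschitz $g$, and a layer-cake integration for \eqref{torta}. The only points where you are vaguer than the paper are the uniform control of $m_\sigma(g_n)$ over the truncations (the paper uses the lower-tail bound for $-|g_n|$ together with a median argument) and the absorption of the extra factor of $\lambda$ from the $B_1$-contribution, which you correctly flag as the technical crux; both close by standard arguments.
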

\begin{proof}
We divide the proof in three steps.

{\em Step 1.} We start considering $g\in {\rm Lip}_b(X)$, with Lipschitz constant less than or equal to $1$. The function
$\tau g$, with $\tau>0$ satisfies the assumptions of Lemma \ref{pre-lem} and consequently,
\begin{equation}\label{for1}
{\rm Ent}_\sigma (e^{\tau g})\le C\tau^2 M_{2\tau}m_{\sigma}(e^{\tau g}).
\end{equation}
Moreover, the function $G(\tau):=m_{\sigma}(e^{\tau g})$ is differentiable and, from \eqref{for1}, its derivative $G'(\tau)=m_{\sigma}(ge^{\tau g})$ satisfies
\begin{align}\label{fat}
\tau G'(\tau)-G(\tau)\log G(\tau)\le C\tau^2 M_{2\tau}G(\tau),\qquad \tau>0.
\end{align}
Thus, set
\[
H(\tau):=\eqsys{ m_\sigma(g), & \tau=0;\\ \tau^{-1}\log G(\tau), & \tau>0.}
\]
By \eqref{fat} we deduce that $H'(\tau)\le C  M_{2\tau}$ whence, integrating from $0$
to $t$, we have
\begin{align}\label{crucial}
H(t)-H(0)&\le C\int_0^t \int_X |y|^2 e^{2\tau |y|}M(dy)d\tau
\notag\\
&=  C\int_X |y|^2\int_0^t e^{2\tau |y|}d\tau M(dy)
\notag\\
& = \frac{C}{2}\int_X |y|(e^{2t |y|}-1) M(dy)=:\theta(t)
\end{align}
or, equivalently, $m_\sigma(e^{tg})\le \exp\Big(t(\theta(t)+m_\sigma(g))\Big)$.
Applying the Chebyshev inequality we get
$$
\sigma\Big(\{g \ge m_\sigma(g)+s\}\Big)\le \exp\Big(-ts+\frac{C t}{2}\int_{X}|y|(e^{2t|y|}-1)M(dy)\Big).
$$
Now, using the inequality $e^{\alpha x}-1\le (e^{\alpha}-1)x$ for any $\alpha>0$ and $x \in (0,1)$,
we can estimate
\begin{align*}
\sigma\Big(\{g \ge m_\sigma(g)+s\}\Big)&\le \exp\Big(-ts+\frac{C t}{2}(e^{2t}-1)
\int_{B_1}|y|^2M(dy)+\frac{C t}{2}\int_{B_1^c}|y|(e^{2t|y|}-1)M(dy)\Big)
\\
&=\exp\Big( -ts+C_1 t(e^{2t}-1)+C_2t\int_{B_1^c}|y|(e^{2t|y|}-1)M(dy)\Big)
=:\exp(\varphi(t,s))
\end{align*}
for any $t,s>0$.
Let us fix $0<\alpha < (C_1C_0^{-1}e^{s_0(1-\gamma)}+C_2)^{-1}$ and distinguish two cases. \\
As the first one we take
$s \ge C_0\alpha^{-1}e^{\gamma s_0}$, where $s_0>0$ (see Hypothesis \ref{hyp_exp}) is such that
$$
\psi(\tau) \ge C_0 e^{\gamma\tau}, \qquad\,\, \tau \ge s_0.
$$
In such case, we choose $t=2^{-1} \psi^{-1}(\alpha s)$ and get
\begin{align}\label{mom}
\varphi(2^{-1}\psi^{-1}(\alpha s),s)=&
-2^{-1}s\psi^{-1}(\alpha s)+C_12^{-1}\psi^{-1}(\alpha s)(e^{\psi^{-1}(\alpha s)}-1)
\notag\\
&+C_22^{-1}\psi^{-1}(\alpha s)\int_{B_1^c}|y|(e^{\psi^{-1}(\alpha s)|y|}-1)M(dy)
\notag\\
\le&  -2^{-1}s\psi^{-1}(\alpha s)+C_12^{-1}\psi^{-1}(\alpha s)e^{\psi^{-1}(\alpha s)}+
C_22^{-1}\alpha s\psi^{-1}(\alpha s)
\notag\\
=& -2^{-1}s\psi^{-1}(\alpha s)\Big( 1-C_1s^{-1}e^{\psi^{-1}(\alpha s)}-C_2\alpha \Big).
\end{align}
Using Hypothesis \ref{hyp_exp} we deduce that $e^{\psi^{-1}(z)}\le (z C_0^{-1})^{1/\gamma}$ for any
$z\ge C_0 e^{\gamma s_0}$. Applying the last estimate with $z= \alpha s$ in \eqref{mom}, we
conclude that $\varphi(2^{-1}\psi^{-1}(\alpha s),s)\le -c_1s\psi^{-1}(\alpha s)$ for some positive $c_1$. \\
As second case, if $s \le  C_0\alpha^{-1}e^{\gamma s_0}=:s_1 $, choosing $t=\beta s$ with a suitable
$\beta \in (0,1)$ we can show that $\varphi (\beta s, s)\le -\bar{\beta}s^2$ for some $\bar{\beta}>0.$
Indeed, using the estimate
$$
e^{\eta s}-1\le s \frac{e^{\eta s_1}-1}{s_1}, \qquad s \in (0,s_1],\, \eta >0,
$$
we have
\begin{align}\label{estimate}
\varphi(\beta s, s)&= -\beta s^2+C_1 \beta s(e^{2\beta s}-1)+
C_2\beta s\int_{B_1^c}|y|(e^{2\beta s|y|}-1)M(dy)
\notag\\
& \le -\beta s^2+C_1 \beta s^2 s_1^{-1}(e^{2\beta s_1}-1)+
C_2\beta s^2s_1^{-1}\int_{B_1^c}|y|(e^{2\beta|y|s_1}-1)M(dy)
\notag\\
&= -\beta s^2\Big(1-C_1 s_1^{-1}(e^{2\beta s_1}-1)-C_2s_1^{-1}\int_{B_1^c}|y|(e^{2\beta|y|s_1}-1)M(dy)\Big).
\end{align}
Now, if $\beta \in (0,1)$ is such that
\begin{equation*}
\beta \le s_1\Big[C_1e^{2s_1}+C_2\int_{B_1^c}|y|e^{2s_1|y|}M(dy)\Big]^{-1}
\end{equation*}
then the term in round brackets in the right hand side of \eqref{estimate} is positive and
$\varphi (\beta s, s)\le -\bar{\beta}s^2$ for some $\bar{\beta}>0$.
Summing up, estimate \eqref{G_tail} follows.

{\em Step 2.} Here we consider the general case and we approximate any Lipschitz continuous function $g$, with Lipschitz constant less than or equal to $1$, by the sequence of functions $(g_n)_n$ defined by $g_n:=(-n)\wedge(g\vee n)$, $n\in \N$ which converges pointwise to $g$. Then, applying the previous step to $-|g_n|$ we infer that
$$\sigma \left(\{|g_n|\le m_\sigma(|g_n|)-t\}\right)\le
\exp(-c_1t\psi^{-1}(c_2t))$$
for $t$ large enough. Choosing $t_0$ such that $\exp(-c_1t_0\psi^{-1}(c_2t_0))\le 1/2$ and $m$ such that
$\sigma (\{|g|\ge m)\}<1/2$ and using that $|g_n|\le |g|$ we deduce that
$\|g_n\|_{L^1(X,\sigma)}=m_\sigma(|g_n|)\le m+t_0$ for any $n \in \N$. Indeed, by contradiction,
if  $m_\sigma(|g_n|)>m+t_0$, we have
\begin{align*}
\sigma(\{|g|<m\})&\le \sigma(\{|g_n|<m\}) = \sigma(\{|g_n|+t_0<m+t_0\})
\\
&\le \sigma(\{|g_n|+t_0<m_\sigma(|g_n|)\})
= \sigma(\{|g_n|<m_\sigma(|g_n|)-t_0\})\le 1/2
\end{align*}
which yields a contradiction with $\sigma (\{|g|\ge m\})<1/2$, as $\sigma$ is a probability measure.
Hence, from the previous estimate  and the monotone convergence theorem we get that
$g\in L^p(X, \sigma)$ for any $p \ge 1$ and that $\|g_n\|_{L^p(X,\sigma)}$ converges to
$\|g\|_{L^p(X,\sigma)}$ as $n \to \infty$. Moreover, using \eqref{G_tail} and that
$m_\sigma(|g_n|)\le m+t_0$ we obtain
\begin{equation*}
\sigma(\{|g_n|\ge m+t_0+t\})\le \exp(-c_1t\psi^{-1}(c_2t))\le \exp(-c_1 t_0\psi^{-1}(c_2t_0))
\end{equation*}
for $t\ge t_0$ whence $\sup_n\|g_n\|_{L^2(X, \sigma)}<\infty$. By a standard compactness argument
we get that $g_n$ converges to $g$ in $L^2(X, \sigma)$ as $n \to \infty$ and hence in measure,
i.e., for any $\varepsilon>0$
\begin{equation}\label{measure}
\lim_{n \to \infty}\sigma(\{|g_n-g|\ge \varepsilon\})=0.
\end{equation}
From \eqref{measure} and \eqref{G_tail} for $g_n$ we infer that \eqref{G_tail} holds true for $g$ too.

{\em Step 3.} Here we prove the last statement. Let $c>0$ and
$\varphi_c(x):=cx\psi^{-1}(|x|)$. Observing that $\varphi_c$ is a nondecreasing
and invertible function, we have
\begin{align*}
\int_X e^{c g \psi^{-1}(|g|)} d\sigma &= \int_X e^{\varphi_c(g)} d\sigma
\\
&\le 1+\int_1^\infty \sigma (\{e^{\varphi_c(g)}>s\})ds
\\
&=1+\int_1^\infty \sigma (\{\varphi_c(g)>\log s\})ds
\\
&=1+\int_1^\infty \sigma (\{g>\varphi_c^{-1}(\log s)\})ds
\\
&\leq K_1+c\int_{T}^\infty \sigma (\{g>t\})e^{\varphi_c(t)}(K_2+\log t)dt,
\end{align*}
for some $K_1,K_2>0$ and $T:=\varphi_c^{-1}(0)\vee (C_0 e^{\gamma s_0})$. Now, performing
the change of variable $\tau=t-m_\sigma(g)$ and using
the estimate \eqref{G_tail} we have
\begin{align*}
\int_{T}^\infty &\sigma (\{g>t\})e^{\varphi_c(t)}(K_2+\log t)dt
\\
&=\int_{T-m_\sigma(g)}^\infty \sigma (\{g>\tau+m_\sigma(g)\})
e^{\varphi_c(\tau+m_\sigma(g))}(K_2+\log(\tau+m_\sigma(g))d\tau
\\
&\le C+\int_{t_0}^\infty \sigma (\{g>\tau+m_\sigma(g)\})
e^{\varphi_c(\tau+m_\sigma(g))}(K_2+\log(\tau+m_\sigma(g))d\tau
\\
& \le C+\int_{t_0}^\infty e^{-c_1\tau\psi^{-1}(c_2 \tau)
+c(\tau+m_\sigma(g))\psi^{-1}(|\tau+m_\sigma(g)|)}(K_2+\log(\tau+m_\sigma(g))d\tau,
\end{align*}
for some positive $C$. Then, for $c$ small enough the function $e^{c g \psi^{-1}(|g|)}$
is summable and the proof is complete.
\end{proof}

\begin{rmk}{\rm
Under the hypotheses of Theorem \ref{promosso} and assuming $\psi(s)= C_0e^{\gamma s}$
for some $C_0>0$, $\gamma \ge 1$ and $s$ big enough, we have
$$
\sigma \left(\{g\ge m_\sigma(g)+t\}\right)\le \exp(-c_1t\log(c_2t))
$$
for some positive $c_1,c_2$ and $t$ large enough.
Viceversa, if Hypothesis \ref{hyp_exp} holds true with a strict inequality in \eqref{psi} then
there exist $K<1$ such that $\psi^{-1}(t) \le  K \log t$ as $t \to \infty$. If in Hypothesis \ref{hyp_exp} the estimate \eqref{psi} holds true for any $\gamma \ge 1$, then we
conclude that $\psi^{-1}(t)=o(\log t)$ as $t \to \infty$.}
\end{rmk}

\begin{cor}
Under the hypotheses of Theorem \ref{promosso} it follows that $\sigma(p)<\infty$ for any $p\ge 1$.
\end{cor}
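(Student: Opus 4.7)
The plan is to apply Theorem \ref{promosso} to the Lipschitz continuous function $g(x):=|x|$, which has Lipschitz constant exactly $1$. Conclusion \eqref{torta} of that theorem then yields, for some sufficiently small $c>0$,
\[
\int_X e^{c|x|\psi^{-1}(|x|)}\sigma(dx)<\infty,
\]
where $\psi^{-1}$ is the inverse of the function $\psi$ introduced in Hypothesis \ref{hyp_exp}. This one estimate is essentially the entire content; the rest of the argument is showing that it controls polynomial moments.

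Next I would exploit the lower bound \eqref{psi}. Since $\psi(s)\geq C_0 e^{\gamma s}$ for $s\geq s_0$, the function $\psi$ is unbounded on $(0,\infty)$, so its continuous nondecreasing inverse satisfies $\psi^{-1}(t)\to\infty$ as $t\to\infty$. In particular, for any fixed $p\geq 1$ the quantity $c|x|\psi^{-1}(|x|)$ eventually dominates $p\log|x|$: there exists $R=R(p)>0$ such that
\[
c|x|\psi^{-1}(|x|)\geq p\log|x|\qquad\text{whenever }|x|\geq R,
\]
which gives the pointwise bound $|x|^p\leq e^{c|x|\psi^{-1}(|x|)}$ on $\{|x|\geq R\}$.

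Finally, I would split $X=\{|x|<R\}\cup\{|x|\geq R\}$. On the first set $|x|^p\leq R^p$ and $\sigma$ is a probability measure; on the second set the pointwise bound above combined with \eqref{torta} shows integrability. Summing the two contributions yields $\sigma(p)<\infty$. There is no real obstacle in this proof: it is a direct consequence of \eqref{torta} applied to $g(x)=|x|$, together with the elementary observation that $\psi^{-1}$ diverges at infinity thanks to \eqref{psi}.
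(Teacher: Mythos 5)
Your proof is correct and follows essentially the same route as the paper: apply \eqref{torta} to $g(x)=|x|$ and observe that $c|x|\psi^{-1}(|x|)$ eventually dominates $p\log|x|$ because $\psi^{-1}$ is eventually bounded below by a positive constant (the paper phrases this as $c|x|\psi^{-1}(|x|)\ge c\psi^{-1}(r_0)|x|$ for $|x|\ge r_0$). The splitting of $X$ into $\{|x|<R\}$ and $\{|x|\ge R\}$ is the same elementary step left implicit in the paper's one-line argument.
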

\begin{proof}
It suffices to take $g(x)=|x|$ in \eqref{torta} and observe that there exist $r_0\in (0,\infty)$
such that $\psi^{-1}(r_0)>0$ and $cg(x)\psi^{-1}(g(x))\ge c\psi^{-1}(r_0)|x|$ for any $|x| \ge r_0$.
\end{proof}

\begin{rmk}\label{gauss_exp}\rm{
Let us show how the results in this section can be reformulated if the Gaussian term $Q$
in the representation of $\lambda\leftrightarrow [b,Q,M]$ does not vanish and satisfies
the assumptions in Remark \ref{gaussiana}. We recall that, in this case, under the
hypotheses of Theorem \ref{promosso} there is a unique invariant measure associated with $P_t$,
given by $\gamma * \sigma$, where $\gamma$ is the Gaussian measure defined in Remark \ref{gaussiana}.
Assuming further that estimate \eqref{gra-est} holds true, let us
reformulate the results in Theorem \ref{promosso}. Indeed, we can prove that any Lipschitz
continuous function $g:X \to \R$, with Lipschitz constant less than or equal to $1$, belongs to $L^1(X, \gamma*\sigma)$ and
there exist positive constants $\tilde{c}_0,\tilde{c}_1,\tilde{c}_2$ and $\tilde{t}_0$ such that
\begin{equation}\label{stima_g1}
(\gamma*\sigma) \left(\{g\ge m_\sigma(g)+t\}\right)\le \left\{\begin{array}{ll}
\exp(-\tilde{c}_0t^2),\qquad\;\, &t \in (0, \tilde{t}_0);
\\
\exp(-\tilde{c}_1t \psi^{-1}(\tilde{c}_2t)),\qquad\;\, &t \in [\tilde{t}_0,\infty).
\end{array}
\right.
\end{equation}
Moreover, for sufficiently small $\tilde{c}>0$,
\begin{equation}\label{stima_g2}
\int_X e^{\tilde{c} g\psi^{-1}(|g|)}d(\gamma*\sigma)<\infty.
\end{equation}
To prove this fact, it suffices to perform some changes in Step 1 in the proof of Theorem \ref{promosso},
as described below. First of all, notice that $m_\gamma (f)$ and $m_\sigma (f)$ can be estimated
by $m_{\gamma*\sigma}(f)$ for any nonnegative function $f$. Thanks to estimate \eqref{bing},
formula \eqref{verme} becomes
\begin{equation}\label{verme1}
{\rm Ent}_{\gamma*\sigma}(e^f)\le K \tau^2 (M_{2\tau}+1)m_{\gamma *\sigma}(e^f)
\end{equation}
for any $f \in C^1_b(X)$ with $\|Df\|_\infty \le \tau$ and some positive constant $K$ depending
also by $\|Q\|_{\mathcal{L}(X)}$.
Then, considering $\tilde{G}(\tau):=m_{\gamma*\sigma}(e^{\tau g})$ and
\[
\tilde{H}(\tau):=\eqsys{ m_{\gamma *\sigma}(g), & \tau=0,\\ \tau^{-1}\log \tilde{G}(\tau), & \tau>0}
\]
in place of $G$ and $H$ (see Step 1 in the proof of Theorem \ref{promosso}) and using \eqref{verme1}
we deduce that $\tilde{H}'(\tau)\le  K(M_{2\tau}+1)$, whence integrating from $0$ to $t$ we have
\begin{equation*}
\tilde{H}(t)-\tilde{H}(0)\le K (\theta(t)+t)
\end{equation*}
where, up to a constant, $\theta$ is defined by
$\theta(t):= \int_X |y|(e^{2t |y|}-1) M(dy)$.
At this point, the proof could be repeated slavishly if we prove that there exists a positive constant $c$ such that $t \le c\theta(t)$ for any $t>0$. Indeed, in such case we would have $\tilde{H}(t)-\tilde{H}(0)\le K'\theta(t)$ for some positive $K'$ as in \eqref{crucial}.
The estimate $t \le c\theta(t)$ can be proved using that $e^x-1\ge x$ for any $x \ge 0$ and observing that
\begin{equation*}
\theta(t)=\int_X |y|(e^{2t |y|}-1) M(dy)\ge 2t \int_X |y|^2M(dy)\ge 2t M_s
\end{equation*}
for any $s\in (0,\infty)$, see Hypothesis \ref{hyp_exp}. In this way all the results stated for $\sigma$ in Theorem \ref{promosso} are true for $\gamma * \sigma$ too. In particular, \eqref{stima_g1} and
\eqref{stima_g2} follow.
}\end{rmk}

\section{Examples}\label{SecEx}

Here we provide examples of generalised Mehler semigroups satisfying our assumptions and to which
our results can be applied.

\subsection{Ornstein--Uhlenbeck operators with fractional diffusion}\label{OUexample}
We consider the Ornstein--Uhlenbeck operator defined by
$$
(\mathcal{L}u)(x)=\frac{1}{2}[{\rm Tr}^s (QD^2u)](x)
+\langle Bx, Du(x)\rangle,\qquad\,\; s\in (0,1),\ x\in \Rd .
$$
We assume that $Q$ is a symmetric nonnegative definite matrix, $B$ is a symmetric nonpositive definite matrix and
${\rm Tr}^s (QD^2u)$ is the pseudo-differential operator with symbol $\langle Q\xi,\xi\rangle^s$.
The realisation of $\mathcal{L}$ in $L^2(\Rd)$ has been studied in \cite{AlBer} and in the space of
H\"older continuous functions in \cite{LR}. The associated generalised Mehler semigroup is given by
\begin{align*}
(P_tf)(x)= \int_{\Rd} f(e^{t B}x+y)\mu_t(dy),
\end{align*}
where
$$
\hat{\mu}_t(\xi)= \exp \Bigg(-\int_0^t \lambda (e^{\tau B^*}\xi)d\tau\Bigg)=
\exp\left(-\frac{1}{2}\int_0^t |Q^{1/2}e^{\tau B^*}\xi|^{2s}d\tau\right),\qquad\;\, \xi \in \Rd.
$$
If $Q$ is invertible, then the L\'evy measure $M$ defined in \eqref{lambda} has the form
\begin{align*}
M(A)=\frac{1}{\det Q^{1/2}}\int_{\R^d}\frac{\chi_A(y)}{|Q^{-1/2}y|^{2s+d}}dy,
\qquad\; \, A\in \mathcal{B}(\Rd).
\end{align*}
Moreover, from \cite[Section 7]{FR}, Hypotheses \ref{base} are satisfied and consequently there exists
a unique invariant measure $\sigma$ for $P_t$. Such measure is absolutely continuous with respect to the
Lebesgue measure with density $\rho$ satisfying
\begin{equation*}
\frac{1}{C(1+|x|^{d+2s})}\le \rho(x)\le \frac{C}{1+|x|^{d+2s}}
\end{equation*}
for some $C > 1$ and any $x \in \Rd$ (see \cite{BG60}). It is also clear that the measure
$M$ satisfies \eqref{App_cond} whenever $s \in \left(\frac{1}{2},1\right)$. From now on we restrict
our attention to this case and prove that, under further conditions,  Hypotheses \ref{hyp_wang}
are satisfied, too, as the next proposition shows.

\begin{prop}\label{h2_OU}
Assume that $Q$ and $B$ are as above. If $Q$ and $B$ are invertible and commute, then Hypotheses
\ref{hyp_wang} hold true whenever
\begin{equation}\label{auto}
\min \set{|\lambda_i|\,|\,i=1,\ldots,d} > \frac{|{\rm Tr}B|}{2s+d},
\end{equation}
where the $\lambda_i$ are the eigenvalues of $B$.
\end{prop}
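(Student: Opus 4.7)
The plan is to verify Hypotheses~\ref{hyp_wang} by an explicit computation of the image measure $M\circ T_t^{-1}$ (here $T_t=e^{tB}$) and a pointwise comparison of its Lebesgue density with that of $M$, which is $\rho_M(y)=(\det Q^{1/2})^{-1}|Q^{-1/2}y|^{-(2s+d)}$.

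First I would change variables $z=e^{tB}y$ in the integral defining $M\circ T_t^{-1}$. Since $|\det e^{-tB}|=e^{-t\,\mathrm{Tr}\,B}$, the image measure is absolutely continuous with density
\[
\rho_{M\circ T_t^{-1}}(z)=\frac{e^{-t\,\mathrm{Tr}\,B}}{\det Q^{1/2}\,|Q^{-1/2}e^{-tB}z|^{2s+d}}.
\]
Because $Q$ and $B$ are symmetric and commute, they are simultaneously diagonalisable, hence $Q^{-1/2}$ and $e^{-tB}$ commute too, and $Q^{-1/2}e^{-tB}z=e^{-tB}Q^{-1/2}z$. Hypothesis~\ref{hyp_wang}\eqref{hyp_wang1} is then equivalent to the pointwise inequality $\rho_{M\circ T_t^{-1}}(z)\le h(t)\rho_M(z)$ for all $z\neq 0$, which, setting $w:=Q^{-1/2}z$, reduces to requiring
\[
h(t)\ \ge\ e^{-t\,\mathrm{Tr}\,B}\sup_{w\neq 0}\frac{|w|^{2s+d}}{|e^{-tB}w|^{2s+d}}=e^{-t\,\mathrm{Tr}\,B}\,\|e^{tB}\|^{2s+d}.
\]

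Next I would exploit the fact that $B$ is symmetric, invertible and nonpositive, so its eigenvalues $\lambda_i$ are strictly negative and $\|e^{tB}\|=e^{t\max_i\lambda_i}=e^{-t\min_i|\lambda_i|}$; moreover $\mathrm{Tr}\,B=-\sum_i|\lambda_i|=-|\mathrm{Tr}\,B|$. The optimal choice is therefore
\[
h(t):=\exp\!\bigl(t\bigl(|\mathrm{Tr}\,B|-(2s+d)\min_i|\lambda_i|\bigr)\bigr),
\]
which is continuous and strictly positive on $(0,\infty)$. The remaining condition of Hypothesis~\ref{hyp_wang}, namely $h\in L^1((0,\infty))$, amounts to requiring the exponent to be strictly negative, which is exactly the eigenvalue assumption~\eqref{auto}.

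I do not anticipate any serious obstacle. Once commutativity is used to cancel $Q^{-1/2}$ in the density ratio, everything is controlled by the operator norm of $e^{tB}$, which for a symmetric operator coincides with its spectral radius. The only mildly delicate point is checking that the supremum of $|w|/|e^{-tB}w|$ is realised uniformly in $w$ as $\|e^{tB}\|$, but this follows from standard linear algebra (the substitution $v=e^{-tB}w$ turns the sup into $\|e^{tB}\|_{\mathrm{op}}$).
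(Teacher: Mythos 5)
Your proposal is correct and follows essentially the same route as the paper: change of variables to compute the density of $M\circ T_t^{-1}$, use of the commutativity of $Q$ and $B$ to cancel $Q^{-1/2}$, and a spectral bound on $e^{-tB}$ yielding $h(t)=e^{t(|{\rm Tr}B|-(2s+d)\min_i|\lambda_i|)}$, whose integrability is exactly condition \eqref{auto}. The only cosmetic difference is that you phrase the spectral step as $\sup_{w\neq 0}|w|/|e^{-tB}w|=\|e^{tB}\|$, while the paper directly bounds $|e^{-tB}w|\ge e^{-t\Lambda}|w|$ with $\Lambda=\max_i\lambda_i$; these are the same estimate.
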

\begin{proof}
Let $\Lambda:=\max \set{\lambda_i\,|\,i=1,\ldots,d}<0$. We have
\begin{align*}
[M\circ e^{-tB}](A)=&\frac{1}{\det Q^{1/2}}\int_{\R^d}\frac{\chi_A(e^{tB}y)}{|Q^{-1/2}y|^{2s+d}}dy
\\
=&\frac{e^{-t{\rm Tr}B}}{\det Q^{1/2}}\int_{\R^d}\frac{\chi_A(z)}{|Q^{-1/2}e^{-tB}z|^{2s+d}}dz
\\
=&\frac{e^{-t{\rm Tr}B}}{\det Q^{1/2}}\int_{\R^d}\frac{\chi_A(z)}{|e^{-tB}Q^{-1/2}z|^{2s+d}}dz
\\
\leq&\frac{e^{-t{\rm Tr}B}}{e^{-(2s+d)\Lambda t}\det Q^{1/2}}
\int_{\R^d}\frac{\chi_A(z)}{|Q^{-1/2}z|^{2s+d}}dz.
\end{align*}
So the function $h $ in Hypotheses \ref{hyp_wang} is
\[
h (t):=e^{-t({\rm Tr}B-(2s+d)\Lambda)},
\]
that clearly belongs to $L^1((0,\infty))$ whenever $|\Lambda|> \frac{|{\rm Tr}B|}{2s+d}$.
\end{proof}

As examples of matrices satisfying the hypotheses of Proposition \ref{h2_OU}, one can
consider as $Q$ a positive definite invertible matrix and as $B$ one of the following:
\begin{enumerate}[\rm (a)]
\item $B=-\beta I$ for any $\beta>0$;
\item $B=-Q^\alpha$ for some $\alpha>0$.
In this case, condition \eqref{auto} becomes
\begin{align*}
r_1>\pa{\frac{1}{2s+d-1}\sum_{i=2}^d r_i^\alpha}^{1/\alpha},
\end{align*}
where $r_1$ is the minimum eigenvalue of $Q$.
\end{enumerate}

Under the previous conditions all the assumptions in Theorem \ref{thm_neve} and Propositions
\ref{Poincare}, \ref{ciao} are satisfied. Then the logarithmic Sobolev inequality
$$
{\rm Ent}_\sigma(|f|)\leq K_{1,f}\int_{\R^d}\int_{\R^d}\frac{|f(x+y)-f(x)|^2}{|y|^{d+2s}} dy\sigma(dx)
$$
holds true for any $f\in \mathcal{H}^2_0$ and some positive $K_{1,f}$. The Poincar\'e inequality
$$
\|f-m_\sigma(f)\|_{L^2(X, \sigma)}^2\le K_2\int_{\R^d} \int_{\R^d} \frac{|f(x+y)-f(x)|^2}{|y|^{d+2s}}dy\sigma(dx)
$$
holds as well for any $f \in \mathcal{H}^2$ and some positive $K_2$.

\subsection{An example in infinite dimension}

Let $X$ be a separable Hilbert space and let $\lambda\leftrightarrow [b,0,M]$ where $b\in X$ and $M$ is an
infinitely divisible $\alpha$-stable L\'{e}vy measure. Consider the semigroup $(T_t)_{t\geq 0}$
defined by $T_tx:=e^{-t\beta}x$ for some $\beta>0$ and every $x\in X$ and $t\geq0$.

We recall that an infinitely divisible L\'{e}vy measure is $\alpha$-stable with $\alpha\in(0,2)$
if and only if there exists a finite measure $\mu$ concentrated on the unit sphere of $X$ such that for any Borel set $B\subseteq X$
\[
M(B)=\int_0^{\infty}r^{-1-\alpha}\pa{\int_{S_1}\chi_B(rx)\mu(dx)}dr,
\]
where $S_1=\{x\in X\,|\,|x|=1\}$ denotes the unit sphere of $X$, see \cite[Theorem 6.2.8]{Lin86} for more details.
Let us show that if $\alpha \in (1,2)$ and $\mu$ is a symmetric measure then Hypothesis \ref{hyp_App},
\ref{base} and \ref{hyp_wang} are all satisfied.

Indeed, observing that
\begin{align*}
\int_{B_1^c}|x|M(dx)=\int_1^{\infty}r^{-1-\alpha}\pa{\int_{S_1}|rx|\mu(dx)}dr=
\mu(S_1)\int_1^{\infty}r^{-\alpha}dr<\infty,
\end{align*}
we deduce that Hypothesis \ref{hyp_App} holds true. Now observe that
\begin{align*}
\lim_{t\ra \infty}b_t&=\lim_{t\ra \infty}
\pa{\int_0^tT_rbdr+\int_0^t\int_XT_rx(\chi_{B_1}(T_rx)-\chi_{B_1}(x))M(dx)dr}
\\
&=\lim_{t\ra \infty}\pa{\int_0^te^{-r\beta}bdr+
\int_0^t\int_0^{\infty}s^{-1-\alpha}\int_{S_1}e^{-r\beta}sy(\chi_{B_1}(e^{-r\beta}sy)-
\chi_{B_1}(sy))\mu(dy)ds dr}
\\
&=\lim_{t\ra \infty}\Bigg[\frac{b}{\beta}(1-e^{-t\beta})+
\int_0^te^{-r\beta}\Bigg(\int_0^{e^{r\beta}}s^{-\alpha}ds+
\int_0^1 s^{-\alpha}ds\Bigg)dr\int_{S_1}y\mu(dy)\Bigg]=\frac{b}{\beta},
\end{align*}
where in the last equality we used the simmetry of $\mu$. Furthermore, arguing as in
\eqref{est_M} we deduce
\begin{align*}
\int_0^{\infty}\int_X(1\wedge |T_rx|^2)M(dx)dr&=
\lim_{t \to \infty}\int_0^{t}\int_X(1\wedge |T_rx|^2)M(dx)dr
\\
& \le \frac{1}{2\beta}\int_X(1\wedge |x|^2)M(dx)<\infty ,
\end{align*}
hence Hypothesis \ref{base}(ii) holds true. Clearly Hypothesis \ref{base}(iii) holds true too and
Theorem \ref{furhman} can be applied to prove the existence of a unique invariant measure
$\sigma$ for $P_t$.  Finally we show that Hypotheses \ref{hyp_wang} are satisfied as well.
For every $A\in \mathcal{B}(X)$ we have
\begin{align*}
[M\circ T^{-1}_t](A)&=\int_0^{\infty}\int_{S_1}\chi_{T_t^{-1}(A)}(rx)\mu(dx)r^{-1-\alpha}dr
\\
&=\int_0^{\infty}\int_{S_1}\chi_{A}(re^{-t\beta}x)\mu(dx)r^{-1-\alpha}dr
\\
&=e^{-\alpha t\beta}\int_0^{\infty}\int_{S_1}\chi_{A}(\rho x)\mu(dx)\rho^{-1-\alpha}d\rho
\\
&=e^{-\alpha t\beta}M(A).
\end{align*}
Hence Hypotheses \ref{hyp_wang} hold true with $h (t)=e^{-\alpha t\beta}$ that clearly belongs
to $L^1((0,\infty))$. Hence all the results in Section \ref{sec_log} can be applied and,
in particular, Theorem \ref{thm_neve} states that for every $f\in \mathcal{F}C^2_{A}(X)$ with
positive infimum and $p\in[1,\infty)$
\begin{align*}
{\rm Ent}_\sigma(f^p)\leq
C\int_X\int_X\frac{|f^p(x+y)-f^p(x)|^2}{f^p(x)} M(dy)\sigma(dx).
\end{align*}

\subsection{A modified version of a model introduced by Koponen}

A tempered stable process is obtained from a one dimensional stable process by ``tempering'' the large jumps,
i.e., by damping exponentially the tails of the L\'evy measure. This class of L\'{e}vy processes has been
introduced by Koponen in \cite{Kop} for options pricing (see also \cite[Section 13.4.3]{Pas11}).

Here we slightly modify the process introduced by Koponen providing a L\'{e}vy measure $M$ and consequently
a generalised Mehler semigroup $P_t$ which satisfy all our assumptions.
For semplicity we consider the one dimensional and centred case but it is not difficult to extend
the results for any dimension and in the non-centred case.

Let $X=\R$ and consider the semigroup $T_tx:=e^{-t\beta}x$ for some $\beta>0$. The L\'{e}vy process we
are interested in is identified by the triple $[b,0,M]$ where $b\in \R$ and
\begin{align*}
M(dx):=c\frac{e^{-{x^2}}}{|x|^{1+{2s}}}dx,
\end{align*}
for some $c>0$ and $s\in(0,1)$.
First of all we show that $P_t$ admits an invariant measure. Indeed, recalling formula
\eqref{scrivere meglio}  we have
\begin{align*}
\lim_{t\ra \infty}b_t &=\lim_{t\ra \infty}\pa{\int_0^t T_r b dr+\int_0^t\int_X T_r x\Big(\chi_{B_1}(T_r x)-\chi_{B_1}(x)\Big)M(dx)dr}\\
&=\lim_{t\ra\infty}\pa{\frac{b}{\beta}(1-e^{-t\beta})+c\int_0^te^{-r\beta}\int_{-e^{r\beta}}^{e^{r\beta}}x\frac{e^{-x^2}}{|x|^{1+2s}}dxdr-c\int_0^te^{-r\beta}\int_{-1}^{1}x\frac{e^{-x^2}}{|x|^{1+2s}}dxdr}\\
&=\lim_{t\ra\infty}\frac{b}{\beta}(1-e^{-t\beta})=\frac{b}{\beta}.
\end{align*}
Moreover
\begin{align*}
\int_0^{\infty}\int_\R(1\wedge |T_rx|^2)M(dx)dr &=c\int_0^{\infty}
\int_\R(1\wedge |e^{-r\beta}x|^2)\frac{e^{-x^2}}{|x|^{1+2s}}dxdr
\\
&=2c\int_0^{\infty}\int_{e^{r\beta}}^{\infty}\frac{e^{-x^2}}{x^{1+2s}}dxdr
+2c\int_0^{\infty}e^{-2r\beta}\int_{0}^{e^{r\beta}} x^2\frac{e^{-x^2}}{x^{1+2s}}dxdr
\\
&\leq 2c\int_0^{\infty}\int_{e^{r\beta}}^{\infty}\frac{1}{x^{1+2s}}dxdr
+2c\int_0^{\infty}e^{-2r\beta}\int_{0}^{e^{r\beta}} \frac{1}{x^{2s-1}}dxdr
\\
&= \frac{c}{2s^2(1-s)\beta}.
\end{align*}
So, Theorem \ref{furhman} applies and a unique invariant measure $\sigma$ exists for $P_t$. Furthermore, using the estimate $x^2e^{-x^2}\leq e^{-1}$, $x\in\R$, we get
\begin{align*}
\int_{B_1^c}|x|M(dx)=2c\int_1^{\infty}x\frac{e^{-x^2}}{x^{1+2s}}dx\leq
\frac{2c}{e}\int_1^{\infty}\frac{1}{x^{2s+2}}dx=\frac{2c}{(2s+1)e},
\end{align*}
whence Hypothesis \ref{hyp_App} hold true and $C_A^2(\R)$ is a core for the generator of the semigroup
$P_t$ in $L^2(\R,\sigma)$.

In order to apply the results in Sections \ref{sec_log} and \ref{sec_exp} we have to verify Hypotheses
\ref{hyp_wang} and \ref{hyp_exp}, respectively. Let us start from Hypotheses \ref{hyp_wang}. If $A$ be a Borel subset of $\R$, then
\begin{align*}
[M\circ T_t^{-1}](A)&=c\int_\R\chi_{T_t^{-1}(A)}(x)\frac{e^{-x^2}}{|x|^{1+2s}}dx=ce^{t\beta}
\int_\R\chi_{T_t^{-1}(A)}(e^{t\beta}y)\frac{e^{-e^{2\beta t}y^2}}{|e^{\beta t}y|^{1+2s}}dy
\\
&=ce^{-2st\beta}\int_\R\chi_{A}(y)\frac{e^{-e^{2\beta t}y^2}}{|y|^{1+2s}}dy=
ce^{-2st\beta}\int_\R\chi_{A}(y)e^{(1-e^{2\beta t})y^2}\frac{e^{-y^2}}{|y|^{1+2s}}dy
\\
&\leq ce^{-2st\beta}\int_\R\chi_{A}(y)\frac{e^{-y^2}}{|y|^{1+2s}}dy=e^{-2st\beta}M(A).
\end{align*}
Now, the function $h (t):=e^{-2st\beta}$ is continuous in $(0,\infty)$ and belongs to $L^1((0,\infty))$,
hence all the results in Section \ref{sec_log} can be applied and, in particular, Theorem \ref{thm_neve}
states that for every $f\in C_b(\R)$ with positive infimum and $p\in[1,\infty)$
\begin{align*}
{\rm Ent}_\sigma(f^p)\leq
C\int_\R\int_\R\frac{|f^p(x+y)-f^p(x)|^2}{f^p(x)} \frac{e^{-y^2}}{|y|^{1+2s}}dy\sigma(dx).
\end{align*}
To conclude, let us consider Hypothesis \ref{hyp_exp}. Indeed recalling that for any $\alpha>0$ there
exists $K(\alpha)>0$ such that $y^4e^{\alpha y-y^2}\leq K(\alpha)$ for any $y>0$ we have
\begin{align*}
\int_{B_1^c}|y|^2e^{\alpha |y|}M(dy)&=2c\int_1^{\infty}y^2e^{\alpha y}\frac{e^{-y^2}}{y^{1+2s}}dy
\\
&\leq 2c\int_1^{\infty}y^2e^{\alpha y-y^2}dy
\\
&\leq 2cK(\alpha)\int_1^{\infty}\frac{1}{y^2}dy=\frac{2c}{3}K(\alpha),
\end{align*}
and, again, for $\alpha\in(0,\infty)$
\begin{align*}
\psi(\alpha)&=\int_{B_1^c}|y|e^{\alpha|y|}M(dy)=
2c\int_1^{\infty}ye^{\alpha y}\frac{e^{-y^2}}{y^{1+2s}}dy
\ge 2ce^{\alpha}\int_1^{\infty}\frac{e^{-y^2}}{y^{2s}}dy=
Ce^{\alpha}.
\end{align*}
Thus, all the results in Section \ref{sec_exp} can be applied and, in particular, Theorem \ref{promosso} guarantees the exponential integrability of Lipschitz continuous functions with respect to $\sigma$.

\begin{rmk}\label{gauss_ex}{\rm
All the examples considered above can be modified adding a Gaussian term satisfying the assumptions in Remark \ref{gaussiana}. In such case, all the estimates and the results stated for $\sigma$ can be reformulated for $\gamma *\sigma$, see Remarks \ref{gaussiana} and \ref{gauss_exp} for a detailed descriptions of the results.}
\end{rmk}

%
%
%

\appendix

\section{Approximation of and by Lipschitz functions}
We collect here, for the reader's convenience, some results we used in the paper, even though
their proofs are quite standard.

\begin{lemm}\label{lusin BUC}
Let $X$ be a metric space, $\theta$ be a finite Radon measure on $X$ and $f:X\ra\R$ be a Borel function.
For every $\eps>0$ there exists a bounded uniformly continuous function $g_\eps:X\ra\R$ such that
\[
\theta\pa{\set{x\in X\tc f(x)\neq g_\eps(x)}}<\eps.
\]
Furthermore $\norm{g_\eps}_\infty\leq 2\norm{f}_\infty$.
\end{lemm}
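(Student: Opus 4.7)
The plan is to combine Lusin's theorem for Radon measures on metric spaces with a McShane-type extension to produce the bounded uniformly continuous approximant $g_\eps$. First, since $\theta$ is finite and $f$ is Borel, I choose $N>0$ so that $\theta(\{|f|>N\})<\eps/2$; when $\|f\|_\infty<\infty$ I simply take $N=\|f\|_\infty$, otherwise I use monotone convergence on the decreasing family $\{|f|>n\}$. Truncating, $f_N:=(-N)\vee f\wedge N$ is a bounded Borel function with $\|f_N\|_\infty\le N$ that differs from $f$ on a set of $\theta$-measure less than $\eps/2$.

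Next, since $\theta$ is a finite Radon measure on a metric space, Lusin's theorem provides a compact set $K\subseteq X$ with $\theta(X\setminus K)<\eps/2$ such that $\phi:=f_N|_K$ is continuous. Compactness of $K$ makes $\phi$ uniformly continuous and bounded by $N$; I fix a modulus of continuity $\omega:[0,\infty)\to[0,2N]$ for $\phi$ which is concave (hence subadditive), continuous, nondecreasing and vanishing at $0$, obtained as the concave envelope of $\min(\omega_0,2N)$ where $\omega_0$ is any modulus.

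The main step, and the only nontrivial one, is extending $\phi$ to a uniformly continuous function on all of $X$: here I use the McShane inf-convolution
\[
h(x):=\inf_{y\in K}\bigl(\phi(y)+\omega(d(x,y))\bigr),\qquad x\in X.
\]
Subadditivity of $\omega$, together with the fact that $\omega$ is a modulus for $\phi$, yields $h|_K=\phi$ and $|h(x)-h(x')|\le\omega(d(x,x'))$ for all $x,x'\in X$, so $h$ is uniformly continuous, with $-N\le h\le 3N$. Truncating, $g_\eps:=(-N)\vee h\wedge N$ remains uniformly continuous, satisfies $\|g_\eps\|_\infty\le N\le 2\|f\|_\infty$ (the bound being trivial when $\|f\|_\infty=\infty$), and coincides with $f_N$ on $K$ because $|\phi|\le N$ there.

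Putting these together, $\{f\ne g_\eps\}\subseteq\{f\ne f_N\}\cup(X\setminus K)\subseteq\{|f|>N\}\cup(X\setminus K)$, which has $\theta$-measure less than $\eps$, and the proof is done. The extension step is really the only point requiring care: on a general metric space, Tietze's theorem alone would yield merely a continuous extension, and the McShane construction is what upgrades the continuity of $\phi$ on the compact set $K$ to uniform continuity on the whole space $X$ with explicit control by the modulus $\omega$.
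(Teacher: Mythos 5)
Your proof is correct, and it follows the same overall strategy as the paper (Lusin's theorem on a compact set of nearly full measure, followed by a uniformly continuous extension to all of $X$), but the two technical ingredients are realized differently. For the Lusin step you use the \emph{restriction} form (a compact $K$ on which $f_N|_K$ is continuous), while the paper uses the form producing a continuous $\tilde{g}_\eps$ on $K_0$ agreeing with $f$ off a small subset of $K_0$; both are standard for finite Radon measures. For the extension step, which is indeed the only delicate point, the paper invokes a Mandelkern--Tietze type formula $\inf_{y\in K_0}\tilde{g}_\eps(y)\,d(x,y)/\mathrm{dist}(x,K_0)$ and cites the literature for its uniform continuity, whereas you give a self-contained McShane inf-convolution $h(x)=\inf_{y\in K}(\phi(y)+\omega(d(x,y)))$ with a concave (hence subadditive) modulus, which yields the explicit estimate $|h(x)-h(x')|\le\omega(d(x,x'))$ directly. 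Your preliminary truncation at level $N$ is an addition the paper does not make: it handles possibly unbounded $f$ cleanly and, after the final truncation of $h$ to $[-N,N]$, gives the sharper bound $\norm{g_\eps}_\infty\le\norm{f}_\infty$ rather than the factor $2$ in the statement. Both arguments are sound; yours trades the external reference for a short explicit computation.
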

\begin{proof}
Consider a compact set $K_0\subseteq X$ such that $\theta(X\smallsetminus K_0)<\eps/2$.
The function $f_{|_{K_0}}:K_0\ra\R$ is a Borel function and by Lusin theorem
(see \cite[Theorem 2.24]{Rud87}) there exists a continuous function $\tilde{g}_\eps:K_0\ra\R$ such that
\[
\theta\pa{\set{x\in K_0\tc f_{|_{K_0}}(x)\neq \tilde{g}_\eps(x)}}<\frac{\eps}{2}
\]
and
\[
\sup_{x\in K_0}|\tilde{g}_\eps(x)|\leq\sup_{x\in K_0}\abs{f_{|_{K_0}}(x)}=\sup_{x\in K_0}\abs{f(x)}.
\]
The Heine--Cantor theorem says that $\tilde{g}_\eps$ is a bounded and uniformly continuous function on $K_0$. Consider the bounded and uniformly continuous extension
(see \cite{Man90})
\[
g_\eps(x)=\eqsys{\tilde{g}_\eps(x) & x\in K_0;\\ \inf_{y\in K_0}\tilde{g}_\eps(y)\frac{d(x,y)}{{\rm dist}(x,K_0)} & x\notin K_0.}
\]
An easy computation gives that for every $x\notin K_0$
\[
\abs{g_\eps(x)}\leq \sup_{z\in X}\abs{f(z)}.
\]
Eventually we get
\begin{gather*}
\sup_{x\in X}\abs{g_\eps(x)}\leq\sup_{x\in K_0}\abs{g_\eps(x)}
+\sup_{x\in X\smallsetminus K_0}\abs{g_\eps(x)}
\leq \sup_{x\in K_0}\abs{\tilde{g}_\eps(x)}+\sup_{x\in X\smallsetminus K_0}\abs{g_\eps(x)}
\leq 2\sup_{x\in X}\abs{f(x)}.
\end{gather*}
Furthermore
\begin{align*}
\theta\pa{\set{x\in X\tc f(x)\neq g_\eps(x)}}&\leq
\theta(X\smallsetminus K_0)+\theta\pa{\set{x\in K_0\tc f(x)\neq g_\eps(x)}}
\\
&= \theta(X\smallsetminus K_0)+\theta\pa{\set{x\in K_0\tc f_{|_{K_0}}(x)\neq \tilde{g}_\eps(x)}}<\eps.
\qedhere
\end{align*}
\end{proof}

\begin{prop}\label{approx}
Let $X$ be a metric space, let $\theta$ be a finite Radon measure on $X$ and $p\geq 1$.
The space ${\rm Lip}_b(X)$ of bounded and Lipschitz continuous functions on $X$ is dense in $L^p(X,\theta)$.
\end{prop}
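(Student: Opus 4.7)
The plan is to proceed by a three-step approximation: first reduce a general $f\in L^p(X,\theta)$ to a bounded Borel function by truncation, then invoke Lemma \ref{lusin BUC} to approximate the truncation by a bounded uniformly continuous function in $L^p$, and finally approximate uniformly continuous functions by Lipschitz ones via inf-convolution.

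Concretely, given $f\in L^p(X,\theta)$ and $\varepsilon>0$, I would first choose $M>0$ large enough that $f_M:=(-M)\vee(f\wedge M)$ satisfies $\norm{f-f_M}_{L^p(X,\theta)}<\varepsilon/3$; this is just dominated convergence applied to $(-M)\vee (f\wedge M)\to f$ as $M\to\infty$. Next, apply Lemma \ref{lusin BUC} to $f_M$ with tolerance $\delta>0$ to obtain a bounded uniformly continuous function $g_\delta$ with $\norm{g_\delta}_\infty\le 2\norm{f_M}_\infty\le 2M$ and $\theta(\{f_M\neq g_\delta\})<\delta$. Since $|f_M-g_\delta|\le 3M$ pointwise and the discrepancy is supported on a set of $\theta$-measure less than $\delta$, we get $\norm{f_M-g_\delta}_{L^p(X,\theta)}\le 3M\delta^{1/p}$, which is less than $\varepsilon/3$ for $\delta$ small enough.

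The final step is to approximate the bounded uniformly continuous function $g:=g_\delta$ by Lipschitz functions using the standard Moreau--Yosida inf-convolution
\[
g_n(x):=\inf_{y\in X}\bigl(g(y)+n\,d(x,y)\bigr),\qquad n\in\N.
\]
A direct application of the triangle inequality shows $g_n$ is $n$-Lipschitz, taking $y=x$ gives $g_n\le g$, and $g_n\ge -\norm{g}_\infty$ so $g_n$ is bounded. The key point is that for uniformly continuous $g$, $g_n\to g$ uniformly: given $\eta>0$, pick $\rho>0$ so that $|g(x)-g(y)|<\eta$ whenever $d(x,y)<\rho$; then for $n\ge 2\norm{g}_\infty/\rho$ the infimum defining $g_n(x)$ is essentially attained at points $y$ with $d(x,y)<\rho$, because any $y$ with $d(x,y)\ge \rho$ contributes at least $-\norm{g}_\infty+n\rho\ge \norm{g}_\infty\ge g(x)$. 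Hence $g(x)-\eta\le g_n(x)\le g(x)$ for all $x$ and $n$ large, giving uniform convergence. Since $\theta$ is finite, uniform convergence implies $L^p$-convergence, so choosing $n$ sufficiently large makes $\norm{g-g_n}_{L^p(X,\theta)}<\varepsilon/3$, and the triangle inequality closes the argument.

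The only real subtlety is step three; the rest is routine. Verifying that the inf-convolution is Lipschitz is immediate from the triangle inequality, while the uniform convergence requires just a little care to trade off the radius $\rho$ of uniform continuity against the Lipschitz constant $n$ to ensure the infimum is attained near $x$. Everything else is truncation and the already-proved Lusin-type result.
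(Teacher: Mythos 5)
Your proof is correct and follows the same three-step decomposition as the paper (truncate, apply Lemma \ref{lusin BUC}, then pass from bounded uniformly continuous functions to bounded Lipschitz functions); the only genuine difference is in the last step. The paper outsources the uniform approximation of a bounded uniformly continuous function by bounded Lipschitz functions to Theorem 1 of Miculescu's paper \cite{Mic02}, whereas you prove it directly with the Moreau--Yosida inf-convolution $g_n(x)=\inf_{y\in X}\bigl(g(y)+n\,d(x,y)\bigr)$. Your verification of that step is sound: the $n$-Lipschitz bound and $-\norm{g}_\infty\le g_n\le g$ are immediate, and the trade-off $n\ge 2\norm{g}_\infty/\rho$ correctly forces the infimum over $\{d(x,y)\ge\rho\}$ to exceed $g(x)$, so that $g(x)-\eta\le g_n(x)\le g(x)$ uniformly. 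What your route buys is self-containedness (no external citation, and the inf-convolution argument works verbatim in any metric space); what the paper's route buys is brevity and a single-index bookkeeping scheme in which the truncation level $k$ and the Lusin tolerance $2^{-k}$ are coupled so that the error $k\cdot 2^{-k/p}$ visibly tends to zero, in place of your $\varepsilon/3$ splitting with independently chosen parameters $M$, $\delta$, $n$. Both are complete proofs of the proposition.
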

\begin{proof}
Fix a version of a $f\in L^p(X,\theta)$. For every $k\in\N$ set
\[
f_k(x)=\eqsys{k & f(x)>k;\\ f(x) & -k\leq f(x)\leq k;\\ -k & f(x)<-k.}
\]
Applying Lemma \ref{lusin BUC}, for every $k\in\N$ there is a bounded and uniformly continuous function $\tilde{f}_k$ such that
\[
\theta\pa{\set{x\in X\tc \tilde{f}_k(x)\neq f_k(x)}}\leq \frac{1}{2^k},
\]
and $\sup_{x\in X}|\tilde{f}_k(x)|\leq 2\sup_{x\in X}\abs{f_k(x)}\leq 2k$. Theorem 1 in \cite{Mic02}
gives a function $g_k\in\lip_b(X)$ such that
\[
\|g_k-\tilde{f}_k\|_\infty\leq\frac{1}{2^k}.
\]
We have
\[
\norm{g_k-f}_{L^p(X,\theta)}\leq\|g_k-\tilde{f}_k\|_{L^p(X,\theta)}
+\|\tilde{f}_k-f_k\|_{L^p(X,\theta)}+\norm{f_k-f}_{L^p(X,\theta)}.
\]
Observe that $f_k$ converges pointwise $\theta$-a.e. to $f$ and $\abs{f_k}\leq\abs{f}$, then by
the dominated convergence theorem we get $\lim_{k\ra\infty}\norm{f_k-f}_{L^p(X,\theta)}=0$.
Furthermore
\begin{gather*}
\lim_{k\ra\infty}\|g_k-\tilde{f}_k\|_{L^p(X,\theta)}\leq
\pa{\theta(X)}^{\frac{1}{p}}\lim_{k\ra\infty}\|g_k-\tilde{f}_k\|_\infty=0,
\end{gather*}
and
\begin{align*}
\lim_{k\ra\infty}\|\tilde{f}_k-f_k\|_{L^p(X,\theta)} &=
\lim_{k\ra\infty}\pa{\int_{\set{\tilde{f}_k\neq f_k}}\abs{\tilde{f}_k(x)-f_k(x)}^pd\theta(x)}^{\frac{1}{p}}
\\
&\leq 2^{\frac{p-1}{p}}(2^p+1)^{\frac{1}{p}} \lim_{k\ra\infty} k
\pa{\theta\pa{\set{x\in X\tc \tilde{f}_k(x)\neq f_k(x)}}}^{\frac{1}{p}}\\
&\leq 2^{\frac{p-1}{p}}(2^p+1)^{\frac{1}{p}}\lim_{k\ra\infty}\frac{k}{2^{k/p}}=0.\qedhere
\end{align*}
\end{proof}

In the following proposition we state an approximation result for Lipschitz continuous and bounded functions by means of cylindrical regular functions. We give just a sketch of the proof of the result emphasizing the construction of the approximant sequence, see  \cite[Section 2.1]{DPT01} and the proof of \cite[Lemma 2.5]{PES-ZA1} for further details.

\begin{prop}\label{Task}
Assume that Hypothesis \ref{hyp_App} holds true and let $g\in {\rm Lip}_b(X)$, with ${\rm Lip}\,g \leq L$.
Then, there exists a sequence $\{g_{m,n}\,|\,m,n\in\N\}\subseteq \mathcal{F}C_A^2(X)$ such that
\begin{align*}
\lim_{n\ra+\infty}\lim_{m\ra+\infty}g_{m,n}(x)=g(x),\qquad x\in X;
\end{align*}
and
\begin{align*}
\sup_{m,n\in\N}\norm{g_{m,n}}_\infty\leq\norm{g}_\infty,\qquad \sup_{m,n\in\N}\norm{Dg_{m,n}}_\infty\leq L.
\end{align*}
\end{prop}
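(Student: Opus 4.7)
The plan is the standard two-step cylindrification-plus-mollification construction, exploiting the fact that $\{h_n\}$ is an orthonormal basis.

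\emph{Step 1 (cylindrical projection).} For each $n\in\N$ let $P_n\colon X\to X$ be the orthogonal projection onto the span of $h_1,\dots,h_n$, and define the Lipschitz function on $\R^n$
\[
\tilde g_n(y_1,\dots,y_n):=g\!\left(\sum_{i=1}^{n}y_i h_i\right),\qquad y\in\R^n.
\]
Because $(h_i)$ is orthonormal, the map $y\mapsto\sum y_i h_i$ is a linear isometry of $\R^n$ onto $P_n(X)$, so $\|\tilde g_n\|_\infty\le\|g\|_\infty$ and $\mathrm{Lip}\,\tilde g_n\le L$. Setting $G_n(x):=\tilde g_n(\langle x,h_1\rangle,\dots,\langle x,h_n\rangle)=g(P_n x)$, the continuity of $g$ and the convergence $P_n x\to x$ (true because $\{h_n\}$ is a basis) give $G_n(x)\to g(x)$ for every $x\in X$.

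\emph{Step 2 (mollification).} Fix a standard mollifier $\rho\in C_c^\infty(\R^n)$ with $\rho\ge 0$, $\int\rho=1$, and set $\rho_m(y):=m^n\rho(my)$. Define
\[
\tilde g_{m,n}:=\tilde g_n*\rho_m\in C^\infty(\R^n).
\]
Since $\rho_m\ge 0$ integrates to $1$, $\|\tilde g_{m,n}\|_\infty\le\|\tilde g_n\|_\infty\le\|g\|_\infty$. Since convolution with a probability density preserves the Lipschitz constant, $\|\nabla\tilde g_{m,n}\|_\infty\le L$. Moreover, for each fixed $m$ the second derivative satisfies $\|D^2\tilde g_{m,n}\|_\infty\le L\|\nabla\rho_m\|_{L^1(\R^n)}<\infty$, so $\tilde g_{m,n}\in C^2_b(\R^n)$. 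Consequently
\[
g_{m,n}(x):=\tilde g_{m,n}\bigl(\langle x,h_1\rangle,\dots,\langle x,h_n\rangle\bigr)
\]
belongs to $\mathcal FC^2_A(X)$ (recall that $h_i$ are eigenvectors of $A^*$, which is exactly what $\mathcal FC^2_A(X)$ requires of the coordinate vectors under Hypothesis \ref{hyp_App}).

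\emph{Step 3 (verification of the bounds and the iterated limit).} The sup-norm bound $\|g_{m,n}\|_\infty\le\|g\|_\infty$ is immediate. For the gradient, because $\{h_1,\dots,h_n\}$ is orthonormal,
\[
|Dg_{m,n}(x)|_X^2=\sum_{i=1}^{n}\left|\frac{\partial\tilde g_{m,n}}{\partial y_i}(y)\right|^2=|\nabla\tilde g_{m,n}(y)|^2\le L^2,
\]
with $y=(\langle x,h_i\rangle)_{i=1}^n$. For the pointwise convergence, continuity of $\tilde g_n$ and the standard property of mollifiers give $\tilde g_{m,n}\to\tilde g_n$ pointwise as $m\to\infty$, hence $\lim_{m\to\infty}g_{m,n}(x)=G_n(x)=g(P_n x)$ for every $x$; combining with Step 1 yields $\lim_{n\to\infty}\lim_{m\to\infty}g_{m,n}(x)=g(x)$.

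The only delicate point is to ensure membership in $\mathcal FC^2_A(X)$ (as opposed to merely $\mathcal FC^2_b$): this is where Hypothesis \ref{hyp_App} intervenes, supplying an orthonormal basis of eigenvectors of $A^*$ to use as the $h_i$. No uniform control on $\|D^2 g_{m,n}\|_\infty$ is required—only boundedness for each fixed $(m,n)$—so the blow-up of $\|\nabla\rho_m\|_{L^1}$ as $m\to\infty$ is harmless.
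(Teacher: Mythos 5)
Your construction is exactly the one in the paper: project onto the span of the eigenbasis $\{h_k\}$ of $A^*$ to get $\tilde g_n(\xi)=g(\sum_k\xi_k h_k)$, then mollify (your $\tilde g_n*\rho_m$ coincides, after the substitution $z=m^{-1}\eta$, with the paper's $\int\psi_n(\xi-m^{-1}\eta)\rho(\eta)\,d\eta$), and the verifications you supply are the "standard arguments" the paper delegates to its references. The proof is correct and takes essentially the same approach.
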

\begin{proof}
Let $\{h_k\,|\,k\in\N\}$ be the orthonormal basis fixed in Hypothesis \ref{hyp_App}. For every
$n\in\N$ consider the function $\psi_n:\R^n\ra\R$ defined as
\begin{align*}
\psi_n(\xi):=g\pa{\sum_{k=1}^n\xi_kh_k},\qquad \xi=(\xi_1,\ldots,\xi_n)\in\R^n.
\end{align*}
Let $\rho\in C_b^\infty(\R^n)$ with support contained in the unit ball and such that
$\int_{\R^n}\rho(\eta)d\eta=1$. For every $m\in\N$ consider
\begin{align*}
\psi_{m,n}(\xi):=\int_{\R^n}\psi_{n}\pa{\xi-m^{-1}\eta}\rho(\eta)d\eta,\qquad
\xi=(\xi_1,\ldots,\xi_n)\in\R^n.
\end{align*}
Letting $g_{m,n}(x):=\psi_{m,n}(\langle x,h_1\rangle,\ldots, \langle x,h_n\rangle)$, the thesis
follows by standard arguments as in \cite[Section 2.1]{DPT01} and the proof of \cite[Lemma 2.5]{PES-ZA1}.
\end{proof}

\end{document}